\theoremstyle{plain}
\newtheorem{theorem}{Theorem}[section]
\newtheorem{lemma}[theorem]{Lemma}
\newtheorem{proposition}[theorem]{Proposition}
\newtheorem{corollary}[theorem]{Corollary}
\newtheorem{example}{Example}
\newtheorem{definition}{Definition}
\begin{document}

\title[The Spiral Index of Knots] {The Spiral Index of Knots}

\date{\today}

\author[Adams]{Colin Adams}
\author[George]{William George}
\author[Hudson]{Rachel Hudson}
\author[Morrison]{Ralph Morrison}
\author[Starkston]{Laura Starkston}
\author[Taylor]{Samuel Taylor}
\author[Turanova]{Olga Turanova}

\address{Colin Adams, Department of Mathematics, Bronfman Science Center, Williams College, Williamstown, MA 01267}
\email{Colin.C.Adams@williams.edu}
\address{William George, Department of Mathematics, University of Toronto, Toronto, Ontario, Canada M5S 2E4}
\email{wgeorge@math.toronto.edu}
\address{Rachel Hudson, Department of Mathematics, Bronfman Science Center,Williams College, Williamstown, MA 01267}
\email{10rah@williams.edu}
\address{Ralph Morrison, Department of Mathematics, Bronfman Science Center,Williams College , Williamstown, MA 01267}\email{10rem@williams.edu}
\address{Laura Starkston, Department of Mathematics, Harvard University, 1 Oxford St., Cambridge, MA 02138}\email{lstarkst@fas.harvard.edu}
\address{Samuel Taylor, Mathematics Department, College of New Jersey, Ewing, NJ 08628-0718}
\email{taylor29@tcnj.edu}
\address{Olga Turanova, Department of Mathematics, Barnard College, Columbia University, New York, NY 10027-6598}
\email{ot2130@barnard.edu}
\maketitle

\begin{abstract}
In this paper, we introduce two new invariants that are closely related to Milnor's curvature-torsion invariant. The first, the spiral index of a knot, captures the number of maxima in a knot projection that is free of inflection points. This invariant is closely related to both the bridge and braid index of the knot. The second invariant, the projective superbridge index, provides a method of counting the greatest number of maxima that occur in a given knot projection. In addition to investigating the relationships among these invariants, we classify all knots that satisfy $(\kappa+\tau)[K] = 6\pi$.
\end{abstract}

\section{Introduction}
A projection of a knot $K$ is said to be in braid form if as we traverse the knot in its projection plane we wind monotonically about a fixed axis. The braid index of the knot, denoted $\beta[K]$,  is then defined to be the minimum number of times the knot winds about the fixed axis in any of its braid forms. This invariant has shown to be a powerful tool for knot theorists because of its close relationship to other classical invariants like bridge index and newer invariants like the HOMFLYPT polynomial. In this paper we investigate knot projections that are similar to braid form but without the constraint of a fixed axis. Here we require the knot projection to wind in a single direction; that is, we do not allow the planar curvature of the knot projection to change sign. We say that this type of knot projection is in 
spiral form and call the minimum number of ``spirals'' necessary for such a projection the spiral index of the knot. 

Spiral index, like braid index, is closely related to other properties of the knot. For instance, both impose significant structure on the Seifert circles generated from a diagram of the knot. We also relate the spiral index of a knot to its bridge index, denoted $b[K]$,  by showing that a projection of a knot in spiral form has the same number of maxima in all directions in its plane of projection. To study this more precisely, we introduce the projective superbridge index of a knot, which is related to the superbridge index defined in \cite{Kuiper}, and which captures the greatest number of maxima of a knot in a projection, minimized over all projections. By relating the spiral index and projective superbridge number to the curvature-torsion invariant defined by Milnor in 1953, we are able to identity all knots for which curvature-torsion is equal to $6\pi$, a question that was posed by Honma and Saeki \cite{Honma}.

\section{Spiral Index}
We begin by introducing the notion of spiral form for a knot. Let $K$ be a knot in $\mathbb{R}^3$ and, given $v\in S^2$, let $P_{v}(K)$ be the projection of $K$ onto the plane $P$ through the origin and orthogonal to $v$. When the vector $v$ is clear from context, we denote the projection $P(K)$. In a projection, an inflection point is a point at which planar curvature is zero. We further classify inflection points as either $s$-inflection points, where planar curvature switches signs, or $u$-inflection points, where sign is unchanged. 

\begin{definition}
A smooth projection of a knot $K$ is in \textnormal{spiral form} if the projection has no $s$-inflection points.
\end{definition}

Note that this definition is equivalent to the curvature of the projection being non-negative at all points or non-positive at all points. Further, a projection is in spiral form if and only if it has the same number of local maxima with respect to every direction in the plane (see section 4 for a proof of the equivalence). 

\begin{definition}
The number of maxima in a projection $P(K)$ that is in spiral form is the \textnormal{spiral number} of that projection, $sp(P(K))$. The \textnormal{spiral index} of a knot $K$, denoted by $sp[K]$, is the minimum of $sp(P(K))$ over all spiral projections of $K$. 
\end{definition}

Spiral form is closely related to the braid index of a knot, denoted $\beta[K]$. In fact, because every knot has a projection in braid form \cite{Alexander}, every knot also has a projection in spiral form -- given a braid projection, we may isotope it to be sufficiently close to a circle centered at the braid axis and thus to have no inflection points. It follows immediately that $sp[K] \leq \beta[K]$. Further, since the minimum number of maxima over all projections of a knot is no greater than the minimum over all spiral projections, we have $b[K]\le sp[K]$.

Even though the braid index of a knot may be defined as the minimal number of maxima of a knot projection that winds monotonically around a fixed axis, often the braid index and spiral index of a knot differ. This is because in spiral form, we do not require a fixed axis. For instance, the projection of $6_1$ in Figure \ref{61spiral} has spiral number 3, while the braid index of $6_1$ is $4$.  Thus, we have $sp[6_1]<\beta [6_1]$.
 
\begin{definition}
A knot $K$ is a \textnormal{curly knot} if $sp[K]<\beta [K]$.   More specifically, we say $K$ is an \textnormal{$n$-curly knot} if it is a curly knot with $sp[K]=n$.
\end{definition}

\begin{figure}
\includegraphics[scale =.8]{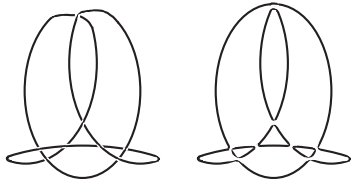}
\caption{The Spiral form of $6_1$ and its Seifert Diagram}
\label{61spiral}
\end{figure}

Curly knots are relatively common among low crossing knots.  Of the 81 nontrivial prime knots up to 9 crossings, there are 26 that we know to be curly knots and only 3 which may or may not be curly. We utilize  Theorem \ref{altprime} and its following paragraph to compute that the $3$-curly knots up to $9$ crossings are $6_1$, $7_2$, $7_6$, $8_4$, $8_6$, $8_8$, $8_{15}$, $9_4$, $9_7$, $9_{11}$, $9_{20}$, $9_{24}$, and $9_{28}$. The known $4$-curly knots up to $9$ crossings are $8_1$, $8_3$, $8_{12}$, $9_2$, $9_8$, $9_{12}$, $9_{14}$, $9_{15}$, $9_{19}$, $9_{21}$, $9_{25}$, $9_{39}$, and $9_{41}$. It is possible that $9_5$, $9_{35}$, and $9_{37}$ are $4$-curly knots but we suspect that their spiral numbers are $5$, which would imply they are not curly. In order to further characterize curly knots, we must consider the Seifert circles generated by  spiral projections.

Spiral form imposes significant structure on the corresponding diagram of Seifert circles. The minimal spiral form of $6_1$ has five Seifert circles as in Figure \ref{61spiral}; four of these circles ``bulge out'' on all sides, and one circle ``bulges in'' on all sides. We formalize these notions as follows. 

Given a Seifert circle $C$, connect its adjacent vertices by line segments (See Figure \ref{bulges}). Call the resulting graph $P$. If $C$ is contained in $P$, we call $C$ an \emph{i-circle} (the sides of $C$ ``bulge in'').  If  $C$ contains $P$, we call $C$ an \emph{o-circle} (the sides of $C$ ``bulge out''). This characterization is equivalent to saying that for an o-circle all sides curve towards the center, while for i-circles all sides curve away from it.

\begin{figure}
\includegraphics[scale=.4]{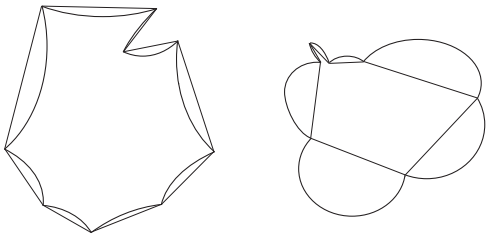}
\caption{i-circles versus o-circles}
\label{bulges}
\end{figure}
 
\begin{lemma} 
For a knot projection in spiral form, any Seifert circle of a Seifert diagram is either an i-circle or an o-circle.
\end{lemma}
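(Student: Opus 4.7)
The plan is to exploit the fact that in a spiral-form projection the planar curvature has constant sign; without loss of generality, assume $\kappa \geq 0$ throughout. This sign is inherited by every arc of every Seifert circle $C$, and I will show that it forces each arc of $C$ to lie on one consistent side of its chord, making $C$ either an i-circle or an o-circle.

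First, I would examine a Seifert circle $C$ with its inherited orientation. Because $C$ is a simple closed curve in the plane, it is traversed either counterclockwise or clockwise, which respectively places the interior of $C$ on the left or right of the direction of travel. Since each arc of $C$ is traversed in the same direction as in the original knot projection, the condition $\kappa \geq 0$ means every arc of $C$ bends consistently to the left as one walks around $C$.

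The key local geometric fact I would establish is: a smooth arc from $A$ to $B$ with constant-sign curvature, lying in a simple closed curve, lies entirely on one side of its chord $\overline{AB}$, with the side determined by the direction in which the arc bends relative to the traversal. Applied to each arc of $C$, since all arcs curve the same way and the orientation around $C$ is consistent, all arcs must bulge in the same direction relative to the polygon $P$ of chords. If they all bulge outward, then $P$ is contained in the region bounded by $C$ and $C$ is an o-circle; if they all bulge inward, $C$ is contained in the region bounded by $P$ and $C$ is an i-circle. Which case occurs depends on the compatibility of the orientation of $C$ with the sign of $\kappa$: for instance, a counterclockwise Seifert circle with $\kappa \geq 0$ yields an o-circle, whereas a clockwise one yields an i-circle.

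The main obstacle I anticipate is verifying the local claim that each arc lies on a definite side of its chord. In principle, an arc with one-sign curvature can wrap around enough that its total tangent rotation exceeds $\pi$, which complicates the argument that the arc does not cross its own chord. I would handle these cases by appealing to the simplicity of $C$ as a closed curve, together with the implicit assumption that the chord-polygon $P$ is a simple polygon (which is needed for the i-circle/o-circle dichotomy to make sense). Once the local claim is in place, the rest of the proof is a straightforward comparison of orientation and curvature sign around $C$.
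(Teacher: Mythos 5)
Your proposal is correct and takes essentially the same approach as the paper: both arguments rest on the observation that the constant sign of curvature, combined with the fact that each Seifert circle inherits the knot's orientation on every edge, forces all edges of a given circle to curve the same way relative to that circle. The paper phrases this as a contradiction (a mixed circle would have a vertex where an inward-curving edge meets an outward-curving one, forcing opposite curvature signs), while you argue directly; the chord-side technicality you flag as the main obstacle is likewise left implicit in the paper's proof, so you are not missing anything the authors supply.
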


\begin{proof} 
Since each edge of the Seifert circle between two vertices has curvature of unchanging sign, we can define each side as curving inward or curving outward. Therefore if there were a Seifert circle which is neither an i-circle nor an o-circle, there would be a vertex at which one edge curving inward met another edge curving outward. Taking into account the orientation of the circle, this implies that the curvatures of these two edges have opposite signs. Since the orientation of the Seifert circle is the same as the orientation of the knot, this contradicts the assumption that the projection is in spiral form.
\end{proof}

\begin{lemma}  The total planar curvature, $\kappa$, of a knot in spiral form with spiral number $sp(P(K))$ is $2\pi sp(P(K))$.
\end{lemma}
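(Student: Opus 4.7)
The plan is to identify the total planar curvature with $2\pi$ times the turning number (rotation number) of the projection as an immersed closed plane curve, and then to show that this turning number equals the spiral number by counting maxima of a height function in a generic direction.

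First I would note that because the projection is in spiral form, the signed curvature $\kappa(s)$ has constant sign along the curve, so
\[
\kappa[K] \;=\; \int |\kappa(s)|\,ds \;=\; \Bigl|\int \kappa(s)\,ds\Bigr|.
\]
By the Whitney turning tangent theorem (which applies to any smooth immersed closed plane curve, including knot projections with transverse self-intersections), the integral $\int \kappa(s)\,ds$ equals $2\pi n$, where $n \in \mathbb{Z}$ is the turning number of the unit tangent map $T:S^1 \to S^1$. So the total planar curvature equals $2\pi|n|$, and the task reduces to showing $|n| = sp(P(K))$.

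Next I would relate $|n|$ to the number of maxima in a generic linear direction. Since $\kappa$ does not change sign, the angle $\theta(s)$ of the tangent vector is a monotone function of arclength (non-decreasing if $\kappa\ge 0$, non-increasing if $\kappa\le 0$), with net change $2\pi n$ around the curve. For a generic direction $\vec v\in S^1$, a local maximum of the height function $\langle\,\cdot\,,\vec v\rangle$ occurs exactly at a point where the tangent crosses the line $\vec v^{\perp}$ going from the half-plane $\{T\cdot\vec v>0\}$ into $\{T\cdot\vec v<0\}$. Because $\theta$ is monotone with total variation $2\pi|n|$, it crosses each of the two values in $\vec v^{\perp}$ a total of $|n|$ times, but only one of these two directions corresponds to the transition from ``going up'' to ``going down''. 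Hence there are exactly $|n|$ maxima, and by the definition of spiral number (and the paper's remark that a spiral projection has the same number of maxima in every direction), this count equals $sp(P(K))$.

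The main obstacle I anticipate is handling the degenerate points: $u$-inflection points where $\kappa = 0$ are permitted in spiral form, and crossings of the immersed curve must be treated carefully. For the $u$-inflections, monotonicity of $\theta$ is preserved (the derivative merely vanishes), so the counting argument goes through. For the crossings, one chooses $\vec v$ generic enough to avoid having two distinct critical points lie at the same height and to avoid tangencies at crossings; this is a measure-zero avoidance, and the resulting count of maxima is independent of $\vec v$ by the equivalent formulation of spiral form given after Definition 2.1. Putting these pieces together yields $\kappa[K] = 2\pi\,sp(P(K))$.
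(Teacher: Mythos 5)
Your proof is correct, but it takes a genuinely different route from the paper's. The paper's argument is integral-geometric: it cites Milnor's theorem that the total absolute curvature of a plane curve equals $2\pi$ times the \emph{average} over $v\in S^1$ of the number of maxima $\mu(K,v)$, and then observes that in spiral form $\mu(K,v)$ is constant equal to $sp(P(K))$, so the average is that constant. You instead work with the rotation index: since the signed curvature does not change sign, the total absolute curvature is $\bigl|\int\kappa\,ds\bigr| = 2\pi|n|$ with $n$ the turning number, and the monotonicity of the tangent angle $\theta(s)$ lets you count exactly $|n|$ sign changes of $T\cdot v$ from positive to negative for each generic $v$, i.e.\ $|n|$ maxima. (A small quibble: the identity $\int\kappa\,ds = 2\pi n$ is really just the definition of the turning number as the degree of the tangent map; the Whitney--Graustein theorem is the regular-homotopy classification and is not needed here.) What each approach buys: the paper's proof is shorter but leans on an external averaging theorem and on the separately asserted fact that spiral form makes the maxima count direction-independent; your proof is more elementary and self-contained, and as a byproduct it actually \emph{establishes} that direction-independence (the count is $|n|$ for every generic direction), which the paper only promises to prove in Section 4. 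Your handling of the degenerate cases ($u$-inflection points, where $\theta$ is merely non-decreasing, and non-generic directions) is the right thing to worry about and is dispatched correctly by genericity of $v$ together with the constancy of the maxima count over directions.
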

\begin{proof}  Let $\mu(K,v)$ denote the number of maxima of the function $K(t)\cdot v$ where $v\in S^1$. Milnor proves in \cite{Milnorcurves} that the total absolute curvature of a knot projection equals $2\pi$ times the average of $\mu(K,v)$ over the unit circle.  Since in the spiral projection $\mu(K,v)=sp(P(K))$ for all $v\in S^1$, it follows that $\kappa=2\pi sp(P(K))$.
\end{proof}

In analogy to the characterization of the braid index as the minimum number of Seifert circles in any projection of the knot(c.f. \cite{Yamada}), we have the following theorem.

\begin{theorem}
The spiral number of a diagram in spiral form is equal to the number of o-circles minus the number of i-circles.
\end{theorem}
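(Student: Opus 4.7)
My plan is to use the previous lemma---which equates the total planar curvature of a spiral projection with $2\pi\cdot sp(P(K))$---together with a careful accounting of how this curvature distributes among the Seifert circles.

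I would first orient $K$ so that the planar curvature of the projection is everywhere non-negative (the spiral hypothesis ensures exactly one of the two orientations achieves this), so that the total signed curvature of the projection equals $+2\pi\cdot sp(P(K))$. Each maximal smooth arc between consecutive crossings belongs to exactly one Seifert circle, so this total can be regrouped as a sum of edge-curvature contributions over the Seifert circles.

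Viewing each Seifert circle $C$ as a piecewise smooth simple closed curve whose corners are the crossings it meets, the turning tangent theorem applied to $C$ reads
\begin{equation*}
\sum_{e\subset C}\int_e \kappa\,ds \;+\; \sum_{v\in C}\alpha_v \;=\; 2\pi\, r(C),
\end{equation*}
where $\alpha_v$ is the corner turning of $C$ at $v$ and $r(C)=\pm 1$ records whether $C$ is traversed counterclockwise or clockwise. The key combinatorial step is that the corner turnings cancel in pairs at each crossing: if the two strands crossing at $v$ have tangent directions $\theta_1$ and $\theta_2$, Seifert smoothing produces two arcs whose corners at $v$ turn by $\theta_2-\theta_1$ and $\theta_1-\theta_2$ respectively. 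Summing the turning tangent identity over all Seifert circles therefore gives
\begin{equation*}
2\pi\cdot sp(P(K)) \;=\; 2\pi\bigl(\#\{\text{CCW circles}\}-\#\{\text{CW circles}\}\bigr).
\end{equation*}

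Finally, I would match traversal direction to circle type. For each edge the center of curvature lies on the concave side, opposite the bulge: for an o-circle (edges bulging away from $P$) the center of curvature lies on the $P$-side of each edge, hence inside $C$, while for an i-circle it lies outside $C$. Under the non-negative curvature hypothesis the center of curvature lies on the left of the direction of motion, so an o-circle is forced to be traversed counterclockwise and an i-circle clockwise. Substituting yields $sp(P(K))=\#\text{o-circles}-\#\text{i-circles}$. I expect the main obstacle to be this last matching step, which requires carefully lining up the sign conventions for curvature, orientation, and the direction of bulging; the rest is a direct application of the previous lemma and the turning tangent theorem.
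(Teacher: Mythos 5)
Your proof is correct and reaches the paper's conclusion by the same basic accounting --- distribute the total curvature $2\pi\, sp(P(K))$ over the Seifert circles and observe that the two exterior angles created at each crossing cancel --- but you execute the per-circle step differently and more cleanly. The paper computes the curvature of each type of circle by hand: an o-circle contributes $2\pi - \sum\theta_C + \sum\theta_N$, an i-circle contributes $-2\pi + \sum\theta_N - \sum\theta_C$, the latter established via an explicit $2n$-gon angle-sum computation plus a separate smoothing trick for nested i-circles that carry convex cusps. You instead invoke the turning tangent theorem for piecewise smooth simple closed curves, which packages all of that into the single statement that each Seifert circle has rotation number $\pm 1$; this absorbs the nested case with no extra work. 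What you must then supply, and do, is the translation between rotation number and the i/o dichotomy: with the curvature normalized to be non-negative, the center of curvature of every edge lies to the left of the direction of travel, and it lies inside an o-circle but outside an i-circle, so o-circles are traversed counterclockwise and i-circles clockwise. Your crossing-cancellation step is identical in content to the paper's observation that $\sum\theta_C = \sum\theta_N$, since Seifert smoothing sends the two corners at a crossing to turning angles $\theta_2-\theta_1$ and $\theta_1-\theta_2$. The one thing you should make explicit is that the final substitution $\#\{\mathrm{CCW}\}=o$ and $\#\{\mathrm{CW}\}=i$ exhausts all Seifert circles only because of the paper's earlier lemma that every Seifert circle of a spiral diagram is either an i-circle or an o-circle; that lemma is available and should be cited at that point.
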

\begin{proof}
 Consider the Seifert circles of an arbitrary projection of our knot $K$ in spiral form. Let $i$ be the number of i-circles and $o$ be the number of o-circles. Each Seifert circle, whether it is an i- or an o-circle, may have vertices of one or both of the two possible forms shown in Figure \ref{exteriorangle}. We will distinguish convex cusps from nonconvex cusps and refer to these as $C$ cusps and $N$ cusps respectively. Each crossing of the knot is a vertex of two distinct Seifert circles. One of these vertices must be an $C$ cusp and the other a $N$ cusp, as illustrated in Figure \ref{exteriorangle}. Further, there is an angle between the tangents of the two strands of the knot at the $i^{th}$ crossing. We call this the \emph{exterior angle} of the crossing, and denote it by $\theta^i$. When we refer to the exterior angle as being associated with an $C$ cusp, we will denote it $\theta_C^i$ and similarly, $\theta_N^i$ for $N$ cusps.
 
\begin{figure}[here]
\includegraphics[scale=.5]{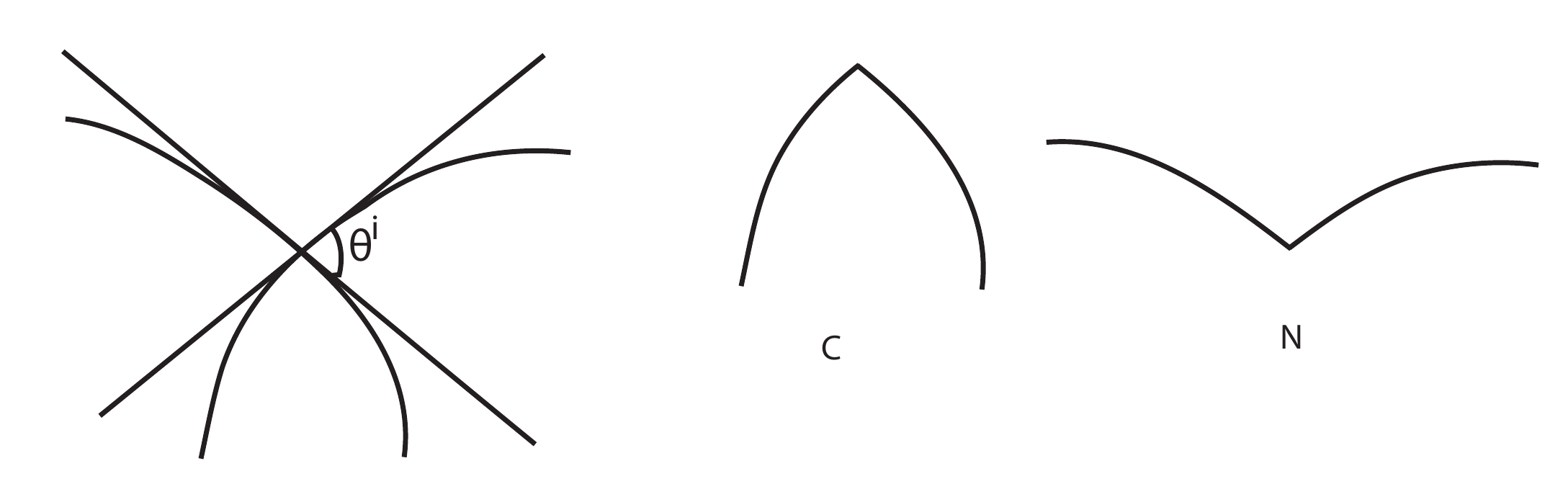}
\caption{Exterior Angles and Cusps}
\label{exteriorangle}
\end{figure}

When our Seifert circle is an o-circle, its total curvature is $2\pi$ plus or minus the sum of the exterior angles at each of its cusps. The exterior angles compensate for the angle of rotation between the tangent vectors at the cusp. Thus the total curvature of the o-circle is

\begin{eqnarray}
\kappa(O)&=&2\pi-\sum_{\text{C cusps}}\theta_C^i+\sum_{\text{N cusps}}\theta_N^j
\label{eqn:ocirc}
\end{eqnarray}

Now we consider an i-circle which has only three cusps. (Figure \ref{inniecurvature}). The exterior angles are labeled $\theta_1$, $\theta_2$, and $\theta_3$. The total curvature of any curve in $\mathbb{R}^2$ is the integral of $d\theta$, which (when we have no inflection points) equals the change in the angle between the normals to the smooth curve at each of its ends. Therefore, the curvature of each edge of the i-circle can be measured by calculating the angle between the two line segments which are orthogonal to that edge at the crossings that determine its end points. These angles are labeled $\phi_1$, $\phi_2$, and $\phi_3$ in Figure \ref{inniecurvature}.

\begin{figure}[here]
\includegraphics[scale=.7]{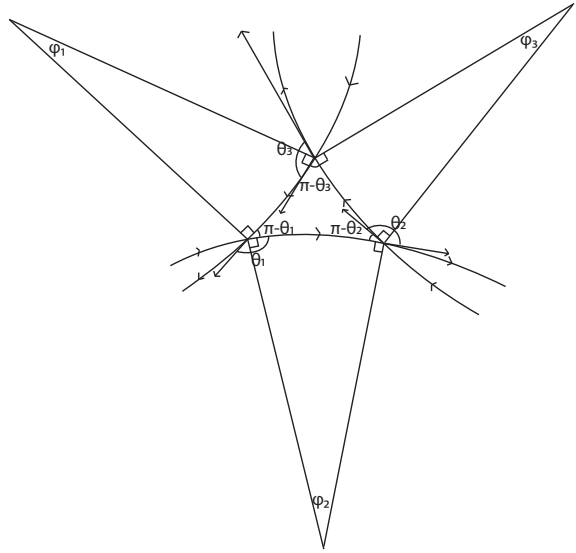}
\caption{Calculation of I-Circle Curvature}
\label{inniecurvature}
\end{figure}

Consider the hexagon in Figure \ref{inniecurvature}, whose interior angles sum to $4\pi$. Summing these angles we obtain the following equation:
$$4\pi =6\pi+\sum_i \phi_i -\sum_j \theta_N^j$$
Since we know $\sum \phi_i$ equals the curvature of the i-circle, this equals $-2\pi +\sum_i \theta_B^j$.  

Note that this formula can generalize to an i-circle with any number of sides as long as all the cusps are $N$ cusps. If we have such an i-circle with $n$ sides, we can put in orthogonal line segments as we did in the $3$ side case and create a $2n$-gon. Thus the sum of the angles is $(2n-2)\pi$. Here, there are $n$ obtuse angles which sum to $2n\pi-\sum_j \theta_N^j$ and $n$ acute angles which sum to $\sum_i \phi_i$. Therefore,
$$(2n-2)\pi=2n\pi-\sum_j\theta_N^j+\sum_i \phi_i$$
So the total curvature of any i-circle with only $N$ cusps is
$$\sum_i\phi_i=-2\pi+\sum_j \theta_N^j$$
Our final case is when i-circles have $C$ cusps, which occurs in the case of nested i-circles. This is pictured in Figure \ref{nestedfigure}.

\begin{figure}[here]
\includegraphics[scale=.7]{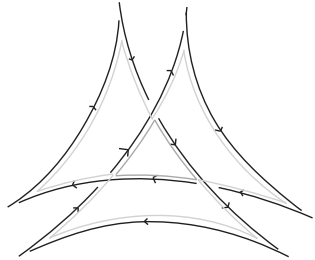}
\caption{Nested i-circles}
\label{nestedfigure}
\end{figure}

In this case, we will replace each $C$ cusp in the i-circle with a smooth curve that has the same tangent vectors at the end points, as in Figure \ref{Acuspinniefigure}.

\begin{figure}[here]
\includegraphics[scale= .7]{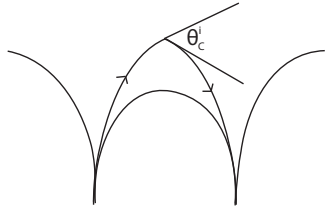}
\caption{Replacing a C cusp on an i-circle with a smooth curve}
\label{Acuspinniefigure}
\end{figure}

The curvature of each replacement curve is the curvature of the $C$ cusp that it replaced plus the exterior angle $\theta_C^i$. We repeat this process for each $C$ cusp. The smoothed out i-circle has only $N$ cusps, thus we can calculate its curvature from the above argument. We then subtract all of the $\theta_C$'s that we smoothed out to get the total curvature of the original i-circle

\begin{eqnarray}
\kappa(I)&=&-2\pi +\sum_{\text{N cusps}}\theta_N-\sum_{\text{C cusps}}\theta_C
\label{eqn:icirc}
\end{eqnarray}

The total curvature of the knot projection equals the sum of the curvatures of all the Seifert circles. Also, in both the formula for the curvature of an o-circle (Equation \ref{eqn:ocirc}) and in the formula for the curvature of an i-circle (Equation \ref{eqn:icirc}) all $\theta_C$'s are subtracted and all $\theta_N$'s are added. Therefore the total curvature of the knot $\kappa(P(K))$ is
$$\kappa(P(K))=2\pi o-2\pi i -\sum_i\theta_C^i+\sum_i\theta_N^i$$
Since each exterior angle is counted once as an $C$ cusp angle and once as a $N$ cusp angle, 
$$\sum_i\theta_C^i=\sum_i\theta_N^i$$
Therefore
$$\kappa(P(K))=2\pi (o-i)$$
By the previous lemma, $2\pi sp(P(K))=\kappa(K)$. So we obtain
$$sp(P(K))=o-i$$
\end{proof}

\begin{corollary} If a knot or link $K$ is curly, then any spiral projection of $K$ realizing $sp[K]$ must have at least one i-circle in its Seifert diagram.
\end{corollary}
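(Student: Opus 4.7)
The plan is to argue by contradiction using the theorem just proved together with Yamada's characterization of the braid index, which is explicitly invoked in the paragraph preceding the theorem. Concretely, suppose $K$ is curly, so that $sp[K] < \beta[K]$ by definition, and let $P(K)$ be a spiral projection realizing $sp[K]$. Assume for contradiction that the Seifert diagram of $P(K)$ contains no i-circles.

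Let $o$ denote the number of o-circles and $i$ the number of i-circles in this diagram. By the lemma, every Seifert circle is either an o-circle or an i-circle, so the total number of Seifert circles of $P(K)$ is $o+i$. By the theorem, $sp[K] = sp(P(K)) = o - i$. Under the assumption $i = 0$, these two quantities coincide, so $sp[K]$ equals the total number of Seifert circles in the diagram of $P(K)$.

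Now I would appeal to Yamada's theorem, which states that the braid index $\beta[K]$ is the minimum number of Seifert circles taken over all projections of $K$. In particular, any specific projection, including $P(K)$, has at least $\beta[K]$ Seifert circles. Combining these facts yields
\[
sp[K] = o + i \ge \beta[K],
\]
contradicting the curly hypothesis $sp[K] < \beta[K]$. Hence $i \ge 1$, as required.

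No serious obstacle is anticipated: the statement is essentially a direct corollary of the formula $sp(P(K)) = o - i$ paired with Yamada's bound. The only thing to be careful about is making sure the theorem's formula is applied to a projection that actually realizes $sp[K]$ (so that the conclusion is a statement about $sp[K]$ itself rather than about $sp(P(K))$ for some non-minimal projection), which is handled by choosing $P(K)$ to realize the spiral index at the outset.
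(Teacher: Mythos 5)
Your proposal is correct and follows essentially the same argument as the paper: apply the theorem to conclude $sp[K]$ equals the number of Seifert circles when $i=0$, then invoke Yamada's characterization of braid index to get $sp[K]\geq\beta[K]$, contradicting curliness. The paper phrases this as a contrapositive rather than a contradiction, but the content is identical.
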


\begin{proof}  By \cite{Yamada}, the minimum number of Seifert circles in any projection of $K$ equals $\beta [K]$. If we have a projection of $K$ realizing $sp[K]$ whose Seifert diagram has only o-circles, then by the previous theorem we have that $sp[K]$ is equal to the number of o-circles.  As there must be at least $\beta [K]$ o-circles, we have $sp[K]\geq \beta[K]$ and thus $sp[K]=\beta[K]$.
\end{proof}

We now use the above results to show that there are no $1$- or $2$- curly knots or links and to characterize all 3-curly knots and links.  First, if $sp[K]=1$, then $b[K]\leq 1$ and $K$ is the trivial knot, which has $\beta[K] = 1 = sp[K]$ and thus is not curly.  In order for a knot or link with $sp[K]=2$ to be curly, its minimal spiral projection must have a Seifert diagram with at least one i-circle.  Note that this i-circle must have more than 2 sides, as any bigon is an o-circle. Further, since an i-circle with four or more sides would force more than 2 maxima, the i-circle must be a triangle. If the triangle has a strand with two adjacent under/over-crossings, one can slide that strand over the third crossing of the triangle, converting the i-circle to an o-circle without changing the number of spirals, showing that the knot is not curly. Therefore, this triangle must be alternating. To complete this triangle to a nontrivial knot or link, we must add at least two additional local maxima. Thus the spiral index is greater than 2, contradicting the existence of 2-curly knots or links.  
 
\begin{theorem}
Every 3-curly knot or link has a minimal spiral projection of the form shown in Figure \ref{fig:3curlytheorem}.
\end{theorem}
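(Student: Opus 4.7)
The plan is to analyze the Seifert structure forced on a minimal spiral projection by the combined constraints that $sp[K] = 3$, that $K$ is curly, and that the spiral number equals $o - i$ (the number of o-circles minus i-circles). First, I would apply the preceding theorem together with its corollary: any minimal spiral projection of a 3-curly knot or link satisfies $o - i = 3$ with $i \geq 1$, so $o \geq 4$.

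Next, I would show in the same spirit as the 2-curly discussion that each i-circle in such a projection must be a triangle. The argument sketched for 2-curly knots relies on the observation that an i-circle with four or more sides forces additional maxima in some projection direction. I would make this quantitative by analyzing the local contribution of an $n$-sided i-circle to the maxima count: because each inward-bulging side contributes a minimum of the height function in an appropriate direction, an i-circle with $n \geq 4$ sides already accounts for enough structure that, combined with the surrounding o-circles needed to close up the diagram, the spiral number must exceed 3. Then the same sliding argument used for 2-curly knots shows each such triangular i-circle must be alternating, since otherwise a strand with two adjacent under/over-crossings can be pushed across the third crossing, converting the i-circle to an o-circle and reducing $o - i$ below $sp[K]$ — contradicting either minimality or the curly condition.

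The heart of the proof is then to rule out $i \geq 2$. Here I would argue that every side of an i-circle must be shared with some o-circle, and that two triangular alternating i-circles would require a combinatorial arrangement of at least five o-circles around them such that no o-circle can be shared between the two triangles without creating a new i-circle or introducing a strand that violates spiral form. By enumerating the ways in which two triangular alternating i-circles can coexist in a spiral diagram, I would show each case either forces more maxima (so $sp > 3$) or allows a reduction that decreases $o - i$. This reduces us to $i = 1$ and $o = 4$, and then the four o-circles surrounding a single alternating triangular i-circle can only be arranged, up to planar isotopy and the symmetries of the triangle, into the configuration pictured in Figure \ref{fig:3curlytheorem}, where the remainder of the knot or link is absorbed into the o-circles as additional crossings without altering the spiral structure.

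The main obstacle will be the case analysis ruling out $i \geq 2$, since unlike the clean formula $sp = o - i$, this step depends on a careful geometric accounting of how i-circles and o-circles can share sides in the plane while preserving spiral form. I would expect this portion of the argument to constitute the bulk of the proof, with the triangularity and alternating conditions following relatively quickly by adapting the 2-curly arguments already in the paper.
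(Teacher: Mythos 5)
Your opening steps track the paper: you invoke $sp = o - i$ and the corollary to get $i \ge 1$, you argue the i-circle must be a triangle because four or more sides force spiral number at least $4$, and you use the same strand-sliding move to show the triangle must alternate. From there, however, your route diverges, and the part you yourself identify as ``the heart of the proof'' is left as an unexecuted plan rather than an argument. Ruling out $i \ge 2$ ``by enumerating the ways in which two triangular alternating i-circles can coexist'' and then asserting that four o-circles around one triangle ``can only be arranged \ldots into the configuration pictured'' names the cases to be checked without checking any of them; as written there is no proof there.

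More importantly, the final step has a genuine gap even granting $i = 1$ and $o = 4$: the Seifert-circle data, and even the planar arrangement of those circles, underdetermines the knot diagram. What the paper actually does at this stage is label the six endpoints $a, b, c, d, e, f$ of the arcs emanating from the alternating triangle and enumerate the ways they can be joined so that curvature keeps constant sign and the spiral number stays $3$: the possibilities are $a$--$b$, $c$--$d$, $e$--$f$ (a knot), $a$--$b$, $c$--$f$, $e$--$d$ (a 2-component link), and $a$--$d$, $b$--$e$, $c$--$f$ (a 3-component link). The decisive output of that enumeration is that in the knot case exactly one connecting strand must wind once around the central triangle --- connect all three pairs directly and you get the unknot; let two strands wind, or one strand wind twice, and the spiral number exceeds $3$. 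Your proposal never produces this winding strand; ``absorbing the remainder of the knot or link into the o-circles as additional crossings'' cannot account for it, since the winding is a global feature of how the arcs close up, not a local modification of an o-circle. Without it you cannot recover Figure \ref{fig:3curlytheorem}, nor distinguish the canonical $3$-curly form from the trivial knot. To repair the argument you would need to replace your circle-arrangement step with (or supplement it by) exactly this kind of endpoint-connection analysis.
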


\begin{figure}
\includegraphics[scale=.5]{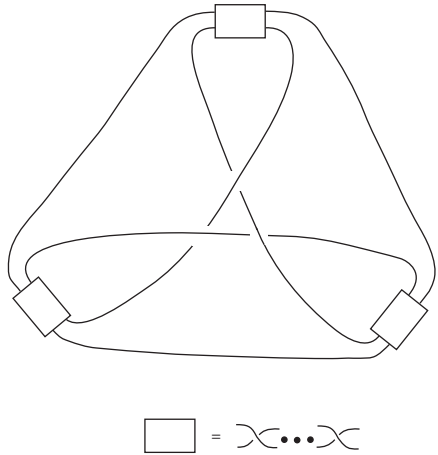}
\caption{Canonical 3-curly: Note to create a knot, two of the boxes contain twists with an even number of crossings and the third contains twists with an odd number of crossings.}
\label{fig:3curlytheorem}
\end{figure}

\begin{proof}
As with the 2-curly case, we require an alternating triangle in the link. (Given an i-circle with four or more sides, it is impossible to connect the sides to produce a projection with a spiral number of less than 4, so the i-circle must be a triangle).  Thus, all links with $sp[K]=3<\beta [K]$ must contain an alternating triangle as shown in Figure \ref{fig:3curlytheorem}. No other strands of the link may cross this part of the diagram.

We now consider all possible configurations that include such an alternating i-circle. In order to preserve constantly signed curvature, point $a$ can only connect to points $b$, $d$, or $f$.  Suppose first that $a$ connects to $b$, $c$ to $d$ and $e$ to $f$.  If $a$ links directly to $b$ without winding around the triangle at the center, and the other pairs of points do likewise, we have a trivial knot.  Thus, we can assume $a$ loops once around the triangle and then connect to $b$ as shown.  If $c$ or $e$ does likewise, or if the strand from $a$ to $b$ loops around twice, the projection will end up with a spiral number number greater than 3, and so $c$ must link straight to $d$ and $e$ to $f$.  This gives us a  3-curly projection of a knot that is included in the possibilities given by Figure \ref{fig:3curlytheorem}. 

If $a$ connects to $b$, the only other possibility is that $c$ connects to $f$ and $e$ to $d$. This generates a 2-component link. Since the strand connecting $e$ to $d$ wraps around the triangle, neither of the other two connecting strands can do so, and we obtain a projection that is included in the possibilities in Figure \ref{fig:3curlytheorem}. Up to symmetry of the central triangle, this case covers all the possibilities for a 2-component link.

The only other possibility is if $a$ connects to $d$, $b$ to $e$ and $c$ to $f$. In this case, none of the connecting strands can wrap around the triangle or the spiral number will be greater than 3. This results in a 3-component link and it also is included in the possibilities provided by Figure \ref{fig:3curlytheorem}.

\begin{figure}
\includegraphics[scale=.15]{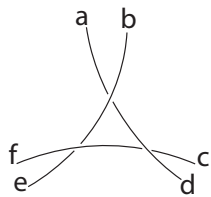}
\caption{i-circle}
\label{3curlyproof}
\end{figure}

Note all crossings outside of the middle triangle can be arbitrarily chosen.  Furthermore, there are three sections of the knot where strands with the same orientation lie alongside one another, and so arbitrarily many positive or negative twists can be added between them.  In the same way we can convert a braid form projection to a spiral projection, we can isotope the knot to prevent these twists from adding inflection points.  Thus, the most general form of a 3-curly knot or link is as in Figure \ref{fig:3curlytheorem}, where the boxes represent any number of positive or negative two-twists.
\end{proof}

\begin{theorem}  All 3-curly links are alternating and prime.
\label{altprime}
\end{theorem}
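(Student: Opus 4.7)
The plan is to exploit the canonical form established in the previous theorem. Since every 3-curly link admits a minimal spiral projection of the shape in Figure \ref{fig:3curlytheorem}, it suffices to verify that this canonical diagram is both alternating and represents a prime link.

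For the alternating property, I would argue by direct inspection of Figure \ref{fig:3curlytheorem}. The central triangle has its three crossings in alternating over/under pattern by construction. Each of the three twist boxes is a standard two-strand twist region, which is alternating within itself. The parity constraints stated in the caption (two boxes with an even number of crossings and one with an odd number, with analogous conditions for the two- and three-component link cases identified in the proof of the previous theorem) ensure that the over/under pattern carried by a strand from the triangle into a twist box matches the pattern required at the twist box's endpoints, so that alternation is preserved globally.

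For primeness, I would invoke Menasco's theorem on alternating links: a reduced alternating diagram of a nontrivial link represents a prime link if and only if the diagram is prime, meaning that no simple closed curve in the projection plane meets the diagram transversely in exactly two points with crossings on both sides. I would verify both hypotheses. First, the canonical diagram is reduced: none of the three triangle crossings is nugatory because each is flanked by the substantive twist regions, and no crossing inside a two-strand twist region is nugatory. Second, each twist box is joined to the rest of the diagram by four strands (two at each end), so any simple closed curve that isolates a single twist box must meet the diagram in at least four points; the same lower bound applies to any curve attempting to isolate the central triangle from its three attached arms. A short case analysis on where a hypothetical two-point separating curve could cross the diagram rules out the remaining possibilities, and Menasco's theorem then delivers primeness.

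The main obstacle will be the case analysis for diagrammatic primeness. Although each individual configuration is easy to dismiss, I must be systematic enough to cover every way a simple closed curve could separate the projection plane with crossings on both sides, including curves that cut through the arms connecting twist boxes to the triangle rather than isolating a single region. The alternating verification in the two- and three-component link cases also requires redoing the parity bookkeeping separately from the knot case, which is a minor but nontrivial check.
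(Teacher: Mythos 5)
There is a genuine gap in your treatment of the alternating property. You assert that the canonical diagram of Figure \ref{fig:3curlytheorem} is alternating ``by construction,'' with the parity constraints ensuring that the over/under pattern entering each twist box matches what alternation requires. But the canonical form theorem only constrains the \emph{number} of crossings in each box (to control the number of components) and allows the boxes to contain arbitrarily many \emph{positive or negative} twists; nothing forces the crossings in a box to alternate with respect to the adjacent crossings of the central triangle. The paper's proof is built precisely around this failure: when a box is not alternating relative to the triangle, one must either perform an explicit diagram move (Figure \ref{alternate1}) to restore alternation --- which requires at least two crossings in the box --- or recognize that the configuration is not actually curly (a single non-alternating crossing in the top box yields a composite of two $2$-braid links with $\beta=3$; a lower box with no alternating crossings admits a $3$-braid form, Figure \ref{alternate2}) and hence falls outside the class being classified. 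Your proposal skips all of this, so the statement you would end up verifying --- that the raw canonical diagram is alternating --- is simply false for many of the configurations the theorem must cover.

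Your primeness argument is in the same spirit as the paper's (both rest on Menasco's result that a reduced, prime alternating diagram represents a prime link), and your more explicit case analysis of two-point separating curves is a reasonable way to make rigorous what the paper dismisses as ``not obviously composite.'' But it must be run on the alternating diagram obtained \emph{after} the corrective moves, not on the original canonical form, so it cannot be completed until the first gap is repaired.
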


\begin{figure}
\includegraphics[scale=.5]{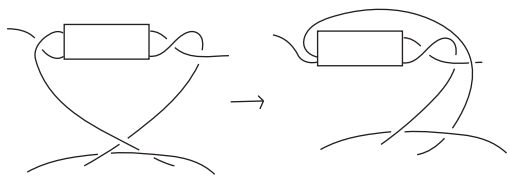}
\caption{Putting 3-curly knots in alternating form.}
\label{alternate1}
\end{figure}

\begin{figure}
\includegraphics[scale=.4]{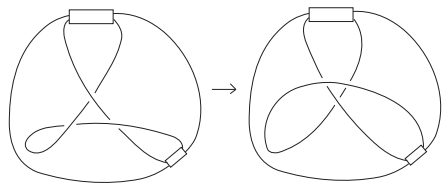}
\caption{Changing to a 3-braid form.}
\label{alternate2}
\end{figure}


\begin{proof}  We may assume that the link appears as in Figure \ref{fig:3curlytheorem} and that the crossings within a given box alternate.  If the crossings in a given box are alternating with respect to the adjacent crossings of the i-circle, then we need not alter that portion of the projection.  Suppose now that the crossings in the top box are not alternating with respect to the i-circle crossings. If there is only one crossing in the top box, then the knot is the composition of two $2$-braid links, and must have $\beta = 3$.  Hence it is not curly. 

Assume now there are at least two crossings in the top box.  By performing the move shown in Figure \ref{alternate1}, we cause that region of the link to become alternating relative to the triangle.

We now consider each of the lower boxes, and assume that they are not already alternating with respect to the center of the knot.  If there are no alternating crossings in the box then the knot can be put in $3$-braid form, as shown in Figure \ref{alternate2} and thus is not curly.  Assuming there are crossings, we can perform a similar move to that shown in Figure \ref{alternate1}, causing that region to be alternating relative to the triangle.  We can thus obtain an alternating projection.  Note that by \cite{Alternate}, the link must be in minimal crossing form.  Further, as we are in minimal crossing alternating form and the projection of the link is not obviously composite, the link is prime \cite{Altprime}.
\end{proof}

In addition to being the minimal spiral form of any $3$-curly knot, Figure \ref{fig:3curlytheorem} allows us to generate all $3$-curly knots up to a given crossing number.  Up to adding arbitrarily many twists, there are 8 possible configurations of this knot (given that there are three locations where crossing can be changed between over and under).  It can be shown that each of these configurations can be converted into an alternating knot whose number of crossings increases as crossings are added to any of the boxes.  Thus, it is possible to go through every possible combination of varying numbers of positive or negative twists to generate all $3$-curly knots up to a given crossing number.  We have determined all $3$-curly knots up to nine crossings, as listed in the introduction. These knots, along with the $3$-braid knots, constitute all knots of spiral index $3$.

\begin{proposition} All $3$-curly knots and links have braid index $4$.
\end{proposition}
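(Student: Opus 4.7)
The inequality $\beta[K] \ge 4$ is immediate: by hypothesis $K$ is 3-curly, so $sp[K] = 3 < \beta[K]$, forcing $\beta[K] \ge 4$. The substantive content of the proposition is the upper bound $\beta[K] \le 4$. My plan is to produce, for any 3-curly knot or link, a projection with at most four Seifert circles; Yamada's theorem \cite{Yamada} then yields $\beta[K] \le 4$ immediately.

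To find such a projection I would start with the canonical 3-curly form from Theorem \ref{altprime} (Figure \ref{fig:3curlytheorem}): a central alternating triangle attached to three arms, each arm consisting of two strands joined by a twist box. The idea is to perform a local isotopy on one of the arms --- essentially rerouting the outermost arc through or across the triangle --- that collapses two Seifert circles into a single one. Since the starting spiral projection satisfies $o - i = 3$ by the theorem preceding Corollary 2.1 and has at least one i-circle (the alternating triangle itself), its total Seifert circle count is at least five; the isotopy trims this to four, producing a (no longer spiral) projection of $K$ with exactly four Seifert circles.

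The main obstacle is that Figure \ref{fig:3curlytheorem} represents a parametrized family of diagrams: there are three connection types (knot, 2-component link, 3-component link) as distinguished in the proof of Theorem \ref{altprime}, together with all admissible twist-parity combinations in the three boxes. I would need to verify that the Seifert-circle-merging isotopy can be arranged uniformly in every case, which amounts to a bounded case check. An alternative strategy that sidesteps the case analysis is to exhibit an explicit closed 4-braid representation directly: choose a braid axis through the interior of the alternating triangle and isotope each arm so that both of its strands wind monotonically around that axis, producing a braid on four strands whose closure is $K$. Either approach reduces the proposition to verifying a local move on the canonical form, and the real effort lies in handling the configurations case by case.
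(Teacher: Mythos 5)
Your lower bound is exactly the paper's: $3$-curly means $sp[K]=3<\beta[K]$, so $\beta[K]\ge 4$ by definition. For the upper bound, your second strategy --- exhibit an explicit closed $4$-braid by isotoping the canonical form around an axis through the central triangle --- is precisely what the paper does: its entire argument is ``perform the move shown in Figure \ref{twirl} on the alternating triangle,'' which converts the $3$-spiral canonical form of Figure \ref{fig:3curlytheorem} directly into a $4$-braid. So in outline you have the right proof.

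The gap is that you never exhibit the move, and you overestimate what remains to be checked. You end by saying ``the real effort lies in handling the configurations case by case,'' anticipating a verification over the three connection types and all twist-parity combinations. The point of the paper's twirl move is that it is \emph{local to the alternating triangle}: it rearranges only the central i-circle, leaving the three arms and their twist boxes untouched, and after the move each arm's two coherently oriented strands already wind monotonically about the new axis (the same observation the paper uses to convert twist regions to spiral/braid form elsewhere). Because the move never interacts with the parts of the diagram that vary across the family, no case analysis is needed --- one picture settles every configuration at once. Your first strategy (merge Seifert circles down to four and invoke Yamada) would also work and is essentially equivalent --- the twirl does reduce the circle count from five to four --- but again the content is the explicit local isotopy, which your write-up defers rather than supplies. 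As it stands the proposal is a correct plan with the single substantive step left unexecuted.
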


\begin{proof}  Consider a link of the form shown in Figure \ref{fig:3curlytheorem}.  By performing the move shown in Figure \ref{twirl} on the alternating triangle, we can put the link in a $4$-braid form, meaning that the braid of any link of this form has a braid index of at most $4$.  As $3$-curly links must be of this form and by definition have braid index at least $4$, we conclude that all 3-curly links have braid index $4$.
\end{proof}

\begin{figure}
\includegraphics[scale=.5]{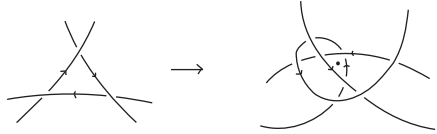}
\caption{Changing a $3$-curly knot from $3$-spiral form to $4$-braid form.}
\label{twirl}
\end{figure}

Although a 3-spiral knot can have at most a difference of $1$ between $\beta [K]$ and $sp [K]$, this does not hold in general.  For instance, Figure \ref{nestedb} shows a curly knot with a nested pair of i-circles, spiral index 4 and braid index 8.  Although a general bound on braid index in terms of spiral index has not yet been found, we conjecture that a quadratic bound exists. 

In Figure \ref{spiralbraid}, we see a representative of a class of knots, which should yield braid index growing quadratically with spiral index. The knots should be taken to be alternating. The knot shown has spiral number 8 in this projection. It's  21 i-circles are shaded. Let $K_n$ be the corresponding generalization to an alternating knot in this class with spiral number at most $n$, where $n \ge 3$. Note that the number of i-circles for such a projection is $\frac{(n-1)(n-2)}{2}$. Utilizing the move from Figure \ref{spiralbraid} on each i-circle and then sliding strands to put the knot in braid form we can see that $\beta[K_n] \leq n + i \leq \frac{n^2 -n +2)}{2} $.

In fact, we expect that this is equality, and utilizing the MFW inequality (c.f.\cite{MFW},\cite{MFW2}) to bound braid index from below, one can see this to be true for $n=3,4,5$. It remains to prove the lower bound on braid index for $n \geq 6$. 

\begin{figure}
\includegraphics[scale=.3]{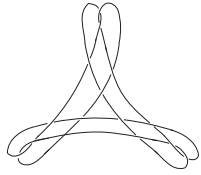}
\caption{A Knot with $sp[K]=4$ and $\beta[K]=8$.}
\label{nestedb}
\end{figure}

\begin{figure}
\includegraphics[scale=.4]{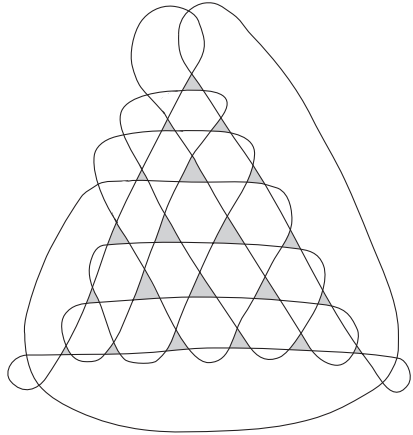}
\caption{The knot $K_8$ in the family $\{K_n\}$.}
\label{spiralbraid}
\end{figure}

The following proposition provides further evidence demonstrating the abundance of curly knots.

\begin{proposition} All twist knots of at least six crossings are curly.
\end{proposition}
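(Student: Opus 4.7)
The plan is to show, for every twist knot $T_n$ with $n \geq 4$ half-twists in its twist region (so at least six crossings), the strict inequality $sp[T_n] < \beta[T_n]$. I will produce an efficient spiral projection of $T_n$ and then cite a lower bound for its braid index that strictly exceeds the spiral count of that projection.

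First I would construct the spiral projection. The standard alternating diagram of a twist knot consists of a two-crossing clasp and a twist region of $n$ half-twists. I would isotope the clasp into an alternating triangle i-circle, the same central triangle appearing in the 3-curly template of Figure \ref{fig:3curlytheorem}, and wrap the twist region around it. For $n = 4$ and $n = 5$ (the knots $6_1$ and $7_2$) the entire twist region fits into a single twist box of the 3-curly template, so $sp[T_n] \leq 3$, recovering what the paper has already established. For larger $n$ the twist region is too long to fit alongside a lone triangle i-circle in spiral form, so I would introduce nested i-circles inside the clasp triangle, in the style of Figure \ref{nestedb}, each absorbing a bounded chunk of twists. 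Applying the theorem $sp = o - i$, the spiral number of the resulting projection can then be read off as the number of o-circles minus the number of i-circles, and it grows only slowly in $n$.

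Next I would lower-bound the braid index. Since the HOMFLYPT polynomial of the twist knot $T_n$ is known in closed form, the Morton--Franks--Williams inequality yields an explicit lower bound on $\beta[T_n]$ that grows with $n$ at the same rate as the spiral count but is always at least one larger. Matching the two growth rates and verifying that the MFW bound genuinely exceeds (rather than equals) the spiral count yields $sp[T_n] < \beta[T_n]$ for every $n \geq 4$.

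The main obstacle will be this last matching step. The spiral construction is essentially algorithmic --- one nested i-circle per fixed chunk of extra twists, with a predictable number of accompanying o-circles --- but turning this into a clean inequality against the MFW bound requires a careful HOMFLYPT computation and some bookkeeping. A secondary point is verifying that the nested projection really is in spiral form, with no inflection points created as the twist region is slid around the nested i-circles; this follows from the same sort of isotopy argument used earlier in the paper to pass from braid projections to spiral projections, but it does need to be checked uniformly in $n$.
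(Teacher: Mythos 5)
Your high-level strategy is exactly the paper's: exhibit an explicit spiral projection of $T_n$ to bound $sp[T_n]$ from above, then use the known HOMFLYPT polynomial and the MFW inequality to bound $\beta[T_n]$ from below, and check the former is strictly smaller. But the two concrete ingredients you leave open are precisely where all the content lies, and your proposed spiral construction is the wrong one. The paper does not use nested i-circles at all: it simply coils the twist region directly (Figure \ref{twistknots}), so that each pair of twist crossings contributes one additional maximum, giving $sp[T_n]\le\frac{n+1}{2}$ for odd $n$ and $sp[T_m]\le\frac{m+2}{2}$ for even $m$. The spiral number therefore grows \emph{linearly} in the number of twist crossings, not ``slowly''; and this is forced, since the paper's data show $sp[8_1]=4=\frac{6+2}{2}$, so no cleverer construction can beat the linear bound there. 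Your nested-i-circle idea is also structurally suspect: the Seifert circles arising in the twist region of a twist knot include bigons, and the paper observes that any bigon is an o-circle, so the twist region does not naturally supply i-circles to subtract via $sp=o-i$. Nesting i-circles (as in Figure \ref{nestedb}) is a device for making braid index \emph{large} relative to spiral index in specially built examples, not a general compression tool for twist regions.

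The second gap is the ``matching step'' you defer. The whole proposition rests on the fact that the MFW bound exceeds the spiral count by exactly one, so there is no slack: you must actually write down $P_{T_n}(v,z)$, compute its $v$-breadth, and obtain $\beta[T_n]\ge\frac{n+3}{2}$ (odd $n$) and $\beta[T_m]\ge\frac{m+4}{2}$ (even $m$), then compare with the explicit spiral counts above. As proposed, neither side of the inequality is pinned down, so the argument does not yet establish $sp[T_n]<\beta[T_n]$ for any $n$. Finally, the low-crossing check goes the other way from how you framed it: $3_1$, $4_1$, and $5_2$ are excluded not because their twist regions ``fit'' anywhere but because their braid indices ($2$ or $3$) are too small for the strict inequality to hold.
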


\begin{proof}  We denote a twist knot by $T_c$, where $c$ is the number of crossings in the ``twist'' portion of the knot (equivalently, $T_c$ is the $(c+2)$-crossing twist knot.) See Figure \ref{twistknots}

We will first prove the result for odd twist knots. The HOMFLYPT polynomial of $T_n$ for odd n is
$$P_{T_n} (v,z)=v^{2}+v^{n+1}-v^{n+3}-z^2(v^{n+1}+v^{n-1}+...+1)$$
Recall that the MFW inequality gives us that 
$\beta[K] \geq \frac{1}{2}\text{v-breadth}P_K(v,z)+1$.
This gives us the bound $\beta [T_n] \geq \frac{n+3}{2}$.  As shown in Figure \ref{twistknots}, we can put any odd-crossing twist knot with at least $7$ crossings in a spiral form with $sp(T_n)= \frac {n+1}{2}$, giving us the bound $sp[T_n]\leq \frac {n+1}{2}$. Thus $sp[T_n]<\beta [T_n]$ for odd $n$ and $T_n$ is curly.  
The HOMFLYPT polynomial of $T_m$ where $m$ is even is
$$P_{T_m} (v,z)=v^{-2}+v^m-v^{m-2}-z^2(v^{m-2}+v^{m-4}+...+1)$$
The MFW inequality  gives us that 
$\beta[K] \geq \frac{1}{2}\text{v-breadth}P_K(v,z)+1$.
Thus, we find that 
$$\beta [T_m] \geq \frac{m+4}{2}$$
We can similarly put any even-crossing twist knot with at least $6$ crossings in a spiral form with $sp(T_m)=\frac {m+2}{2}$.  Therefore $sp[T_m]\leq \frac{m+2}{2}$.  Thus we have $sp[T_m]\leq \frac{m+2}{2}<\frac{m+4}{2}\leq \beta [T_m]$, so $sp[T_m]<\beta [T_m]$ for even m and $T_m$ is curly. We conclude that all twist knots with at least 6 crossings are curly. Note that none of $3_1$, $4_1$ or $5_2$ can be curly since they all have braid index $2$ or $3$.
\end{proof}

\begin{figure}
\includegraphics[scale=.5]{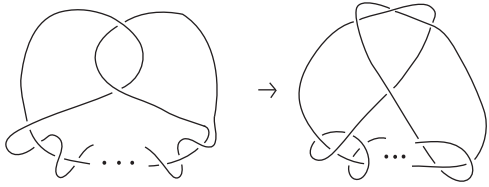}
\caption{Putting Odd Twists Knots in Spiral Form}
\label{twistknots}
\end{figure}

\begin{proposition}  $sp[K_1\#K_2] \leq sp[K_1]+sp[K_2]-1$
\end{proposition}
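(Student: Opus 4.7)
The plan is to construct, starting from minimal spiral projections of $K_1$ and $K_2$, a spiral projection of $K_1 \# K_2$ realizing at most $sp[K_1] + sp[K_2] - 1$ maxima; the inequality will then follow directly from the definition of spiral index. I would form the connected sum by splicing together the outer Seifert o-circles of the two minimal projections into a single one, and count using the theorem proved earlier in this section that for any spiral projection the spiral number equals the number of o-circles minus the number of i-circles.

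First I would take spiral projections $P(K_1)$ and $P(K_2)$ realizing $sp[K_1]$ and $sp[K_2]$. After possibly reversing the orientation of one of the knots (which does not change the unoriented knot), I may assume both projections have planar curvature of the same sign; say $\kappa > 0$ throughout. In each projection let $O_i$ be the Seifert circle bordering the unbounded region of the plane. Then $O_i$ is a simple closed curve whose arcs all have $\kappa > 0$, so its total turning is $+2\pi$, its traversal is counterclockwise, and it is strictly convex; hence $O_i$ is an o-circle.

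Next I would position $P(K_2)$ exterior to and disjoint from $P(K_1)$ so that $O_1$ and $O_2$ face one another across a small gap, and choose points $p_1 \in O_1$, $p_2 \in O_2$ at which the tangents to the two circles point in opposite directions (such a pair exists because the tangent vector sweeps through every direction on a smooth simple closed curve). I would then remove short arcs of $P(K_i)$ around each $p_i$ and splice in two parallel bridging arcs across the gap, forming a thin band connecting the two projections. Since the four endpoint tangents are compatible with the fixed curvature direction and the bridging arcs live in a thin corridor where only small turning is needed, they can be drawn with curvature of the same sign as the rest of the projection (or zero where desired), so no $s$-inflection points are introduced and the combined projection is in spiral form. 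The resulting diagram represents $K_1 \# K_2$, perhaps with the orientation of $K_2$ reversed, which gives the same unoriented knot.

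Finally I would count Seifert circles in the new diagram: the bridging arcs fuse $O_1$ and $O_2$ into a single outer o-circle, while every other Seifert circle is left unchanged. So the combined projection has $o_1 + o_2 - 1$ o-circles and $i_1 + i_2$ i-circles, and by the $sp = o - i$ theorem its spiral number is $(o_1 + o_2 - 1) - (i_1 + i_2) = sp[K_1] + sp[K_2] - 1$, which establishes the proposition. The main subtlety, and the step I expect to require the most care, is the geometric construction of the bridging arcs in spiral form: one must choose $p_1, p_2$ so that their tangents are antiparallel and then take the bridging arcs short enough that the curvature required for tangent continuity can be made small in magnitude and of the correct sign, so that the whole projection truly has no inflection points of either type.
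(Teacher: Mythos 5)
There is a genuine gap, and it sits exactly at the step you flagged: the bridging arcs cannot in fact be drawn with curvature of the correct sign. With both projections traversed so that $\kappa>0$ (counterclockwise) and placed side by side, the facing points $p_1\in O_1$ and $p_2\in O_2$ with antiparallel tangents have, say, $T_{p_1}=(0,1)$ on the right side of $P(K_1)$ and $T_{p_2}=(0,-1)$ on the left side of $P(K_2)$, which lies to the right of $p_1$. A bridging arc from a loose end near $p_1$ (heading up) to a loose end near $p_2$ (heading down) must turn by $-\pi$ modulo $2\pi$. Turning by $+\pi$ with everywhere non-negative curvature forces the direction vector to sweep counterclockwise from $(0,1)$ to $(0,-1)$ through $(-1,0)$, so the $x$-coordinate of the arc is non-increasing and it can never reach a point to its right; turning by $+3\pi$ would inject $6\pi$ of spurious curvature. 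So each bridge must turn by $-\pi$, i.e., carry negative curvature, creating $s$-inflection points where it meets the positively curved spiral arcs: the composite diagram is not in spiral form, and the theorem $sp=o-i$ does not apply to it. The same failure shows up in the total-curvature lemma: a spiral diagram with $sp[K_1]+sp[K_2]-1$ maxima must have total curvature $2\pi(sp[K_1]+sp[K_2]-1)$, but your construction deletes only arbitrarily short arcs (negligible curvature) and adds arcs of allegedly non-negative curvature, leaving total curvature at least $2\pi(sp[K_1]+sp[K_2])-\varepsilon$. One must remove $2\pi$ worth of curvature from the two diagrams before gluing.

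That is precisely what the paper's proof arranges. It shows every projection has an \emph{exterior arc} (crossing-free in its interior and lying on one side of the line through its endpoints), applies a projective linear transformation --- verified to preserve minimal spiral form --- so that this arc subtends an angle of exactly $\pi$, then deletes the angle-$\pi$ exterior arc from each summand and joins the four loose ends, whose tangents are now pairwise parallel when the two diagrams are stacked with the deleted arcs facing each other, by straight segments of zero curvature. Removing $\pi$ of curvature from each knot and adding none yields total curvature $2\pi(sp[K_1]+sp[K_2]-1)$, hence the desired spiral number. Your Seifert-circle accounting $(o_1+o_2-1)-(i_1+i_2)$ is an attractive alternative to the paper's curvature count and would be valid if applied to the paper's construction, but it cannot substitute for the geometric normalization that makes the glued diagram a spiral diagram in the first place.
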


\begin{proof}Take projections of conformations of $K_1$ and $K_2$ in minimal spiral form, $\alpha_1(t)=P_1(K_1(t))$ and $\alpha_2(t)=P_2(K_2(t))$.

We define an exterior arc to be an arc of the projection with no crossings except possibly at its end points such that the line that passes through its endpoints divides the plane so that the arc is the only piece of the knot on that side of the line.

We first claim that any knot projection has an exterior arc. Given a projection lying in $\mathbb{R}^2$, we can find such an arc by taking a horizontal line above the knot and lowering it until it touches the knot tangentially. Here it will intersect the knot at finitely many points. Take the point whose $x$ coordinate is the greatest; say this is $\alpha(t_0)$. Without loss of generality say we have chosen a parameterization with orientation going in the positive $x$ direction at $\alpha(t_0)$. Then there exists an $\varepsilon >0$ such that  the set of tangent lines to points in $\alpha((t_0,t_0+\varepsilon))$ all intersect the knot in exactly one place (the point of tangency). A subset of those tangency points will be an exterior arc.

If $\alpha_1$ or $\alpha_2$ has an exterior arc that subtends an angle of $\geq \pi$, we do no transformation to that knot projection. 
If a projection has no such exterior arc, we take an exterior arc that subtends a smaller angle. Since the arc subtends an angle less than $\pi$, the tangent lines to the curve at the endpoints of the arc intersect on the exterior side of the plane. Put the plane of projection into $\mathbb{R}^3$ such that the line through the end-points is along the $x$-axis and the line perpendicular to the $x$-axis that goes through the intersection point of the tangent lines be the $z$-axis. Let $(0,0,h)$ be the intersection point of the tangent lines. Given this set-up, we perform a linear transformation onto the $xy$-plane given by:
$$T(x,0,z)=(\frac{xh}{h-z}, \frac{z}{h-z}, 0)$$
This transformation sends each point $p$ in the $xz$-plane where $z<h$ to the point where the ray from $(0,-1,h)$ and through $p$ intersects the $xy$-plane. Notice that this sends the tangent lines to the arc at its endpoints to parallel lines therefore the projection of the arc subtends an angle of $\pi$.

Now we claim that $T$ preserves minimal spiral form. First of all since $\alpha_i)$ has no $s$-inflection points, $T(\alpha_i))$ has no $s$-inflection points because if $T(\alpha_i))$ had some $s$-inflection point, $p_0=T(\alpha_i(t_0)))$, then for any $\varepsilon$ neighborhood of $t_0$, $B_{\varepsilon}$, the tangent line to the curve at $p_0$ would have pieces of $T(\alpha_i(B_{\varepsilon})))$ to either side of the line. The plane that passes through the tangent line at $p_0$ and the tangent line at $T^{-1}(p_0)$ divides $\mathbb{R}^3$ into two halves. T preserves the division into these two halves. However since $\alpha_i)$ has no $s$-inflection points, there exists a $\delta$ neighborhood of $t_0$, $B_{\delta}$ such that $\alpha_i(B_{\delta}))$ only on one side of this plane. Therefore $T(\alpha_i(B_{\delta})))$ is only on one side of the plane, and thus only on one side of the tangent line so $p_0$ is not an $s$-inflection point.

Note that if we consider any map in the family of linear transformations
$$\{f_s(x,0,z)=(1-s)P(x,0,z)+s(x,0,z)=(x+\frac{sxz}{h-z},\frac{sz}{h-z},z-tz)\}$$
and apply any of these $f_s$ (where $s\in [0,1]$) to the spiral knot projection, there will be no inflection points in the image.

Finally, we assert that the transformation maintains \emph{minimal} spiral form. Assume to the contrary, that the spiral number of the resulting projection is greater than the spiral number of the original projection. Then there must exist some $s_0$ such that the image of the knot projection under $f_{s_0-\varepsilon}$ differs in spiral number by one from the image of the knot projection under $f_{s_0+\varepsilon}$ for all $\varepsilon>0$ which are sufficiently small. The only way to transition to a higher spiral number requires a cusp. However, since $\alpha_i$ is smooth, $f_s\circ \alpha_i$ is also smooth for any $s\in [0,1]$ and therefore has no cusps. 

Thus $T$ preserves minimal spiral form and gives us a projection of the knot which has an exterior arc which subtends an angle of at least $\pi$. Using these spiral projections of $K_1$ and $K_2$ we can compose them by deleting the exterior arc subtending an angle of $\pi$ and connecting the loose ends with straight horizontal lines. This yields a spiral form of $K_1\# K_2$ which has spiral number equal to $sp(K_1)+sp(K_2)-1$ giving us the upper bound.
\end{proof}

\begin{corollary}  There exist knots with arbitrarily large gaps between $sp[K]$ and $\beta [K]$.
\end{corollary}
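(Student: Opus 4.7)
The plan is to exploit the sub-additivity of spiral index under connect sum (the proposition just proved) together with the well-known additivity of braid index under connect sum, namely $\beta[K_1 \# K_2] = \beta[K_1] + \beta[K_2] - 1$ (Birman--Menasco). Since we already have a single curly example in hand, for instance $6_1$ with $sp[6_1] = 3$ and $\beta[6_1] = 4$, iterated connect sums of this knot should amplify the gap linearly in the number of summands.

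Concretely, let $K_0$ be any curly knot with $\beta[K_0] - sp[K_0] \geq 1$ (take $K_0 = 6_1$), and let $K^{(n)}$ denote the $n$-fold connect sum $K_0 \# K_0 \# \cdots \# K_0$. Applying the preceding proposition inductively gives
\[
sp\bigl[K^{(n)}\bigr] \;\leq\; n \cdot sp[K_0] - (n-1).
\]
On the other hand, additivity of braid index under connect sum yields
\[
\beta\bigl[K^{(n)}\bigr] \;=\; n \cdot \beta[K_0] - (n-1).
\]
Subtracting,
\[
\beta\bigl[K^{(n)}\bigr] - sp\bigl[K^{(n)}\bigr] \;\geq\; n\bigl(\beta[K_0] - sp[K_0]\bigr) \;\geq\; n,
\]
which is unbounded as $n \to \infty$.

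The main substantive input beyond what is already established in the paper is the braid-index additivity statement for connect sums; this is a classical theorem and can simply be cited. The only step I would double-check is the inductive application of the proposition: since the proposition is stated for a two-fold connect sum, I would iterate it, writing $K^{(n)} = K^{(n-1)} \# K_0$ and applying the bound $sp[K^{(n)}] \leq sp[K^{(n-1)}] + sp[K_0] - 1$, which telescopes to the displayed inequality. No real obstacle is expected; the argument is essentially a one-line consequence of the preceding proposition once braid-index additivity is invoked.
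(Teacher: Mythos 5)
Your proof is correct and takes essentially the same approach as the paper: both iterate the connect-sum bound on spiral index for copies of $6_1$ and play it against the growth of braid index under composition. If anything, you are slightly more careful than the paper, which loosely calls braid index ``subadditive'' when the argument really needs the exact additivity $\beta[K_1\#K_2]=\beta[K_1]+\beta[K_2]-1$ of Birman--Menasco, which you correctly identify and cite.
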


\begin{proof} Consider the knot $6_1$, which has spiral index 3 and braid index 4, and compose it with $n-1$ copies of itself. Denote the resulting knot $n6_1$. Since braid index is subadditive under composition, $\beta[n6_1]$ increases by $3$ as $n$ increases by $1$. Since the spiral form of $6_1$ has two exterior arcs of curvature $\pi$, we can compose it with itself $n$ times along these arcs and obtain a spiral form whose spiral number is $nsp[6_1]-n$ which means the spiral index can increase by at most $2$ for each $n$. Therefore the braid index becomes arbitrarily higher than the spiral index as $n$ increases. 
\end{proof}

Note that an identical proof holds for any combination of curly knots composed with one another.

\section{Projective Superbridge Number}

We begin this section by introducing the projective superbridge number of a knot. This invariant minimizes the greatest number of maxima of a knot seen in a given projection.

\begin{definition}
Let $K$ be a conformation of a knot in 3-space and, given $v\in S^2$, let $P_{v}(K)$ be the  projection of $K$ onto a plane $P$ orthogonal $v$. Now let $\mu(w,P_{v}(K))$ denote the number of maxima of the knot projection with respect to its projection onto the vector $w\in S^1\subset P$.
$$ psb[K] = \min_{K\in[K]} \min_{v\in S^2} \max_{w\in S^1}\mu(w,P_vK)$$
\end{definition}

Note that we can define bridge index analogously as
$$b[K]=\min_{K \in [K]} \min_{v \in S^2} \min_{w \in S^1} \mu(w,P_vK)$$

Moreover, the superbridge index of \cite{Kuiper} is given by 

$$sb[K]=\min_{K \in [K]} \max_{v \in S^2} \max_{w \in S^1} \mu(w,P_vK)$$

It follows immediately that $b[K] \leq psb[K] \leq sb[K] $.
 
One of our primary interests in projective superbridge number is its relation to the stick index, $s[K]$, of a knot $K$. Stick index is the least number of line segments in 3-space, attached end to end, needed to realize a conformation of the given knot. The following result is useful in determining lower bounds for stick index.

\begin{proposition}
For a knot $K$, we have
$$s[K] \geq 2psb[K]+1$$
\end{proposition}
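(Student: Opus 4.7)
The plan is to fix a stick conformation of $K$ realizing $s = s[K]$ and to exhibit a single projection direction $v \in S^2$ for which $\mu(w, P_v K) \le (s-1)/2$ holds for every $w \in S^1 \subset P$. Once this is done, the definition of $psb[K]$ immediately forces $psb[K] \le (s-1)/2$, and hence $s[K] \ge 2\,psb[K] + 1$.

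The key ingredient is a standard fact about closed polygons: for any closed polygonal curve in $\mathbb{R}^2$ with $n$ vertices and any direction $w$, the linear height function $x \mapsto x \cdot w$ has equally many local maxima and local minima (because the curve is closed), so the number of local maxima is at most $\lfloor n/2 \rfloor$. After a small generic perturbation of the stick conformation that changes neither the stick count nor the knot type, I may assume the conformation is in general position --- in particular, no two sticks are parallel in $\mathbb{R}^3$.

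The argument then splits by the parity of $s$. If $s$ is odd, I take any projection direction $v$ that is not parallel to any stick; then $P_v K$ is a closed polygon with $s$ distinct vertices, and the polygon fact gives $\mu(w, P_v K) \le \lfloor s/2 \rfloor = (s-1)/2$ for every generic $w$. If $s$ is even, I instead choose $v$ parallel to some stick $e$; then $e$ collapses to a single point under $P_v$, and $P_v K$ becomes a closed polygon with only $s-1$ effective vertices, so the polygon fact now gives $\mu(w, P_v K) \le \lfloor (s-1)/2 \rfloor = (s-2)/2 \le (s-1)/2$. Either way, $\mu(w, P_v K) \le (s-1)/2$ for all generic $w$, which yields the desired bound on $psb[K]$.

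The main subtlety I expect lies in the non-generic directions $w$ that are perpendicular to some projected stick: there the height function is constant along an entire edge rather than peaked at its two endpoints. A flat edge at maximal height should be counted as a single local maximum, so it merges two vertices into one critical cluster and only decreases the count, leaving the bound above intact. Checking this cluster bookkeeping carefully, and verifying that the ``parallel to a stick'' projection in the even case introduces no further coincidences beyond the intended single collapse, are the main technicalities; both follow from the general position assumption on the stick conformation.
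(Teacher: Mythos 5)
Your proof is correct and follows essentially the same route as the paper's: project along a direction parallel to one stick so that only $s[K]-1$ vertices survive, note that local maxima of a closed polygon occur only at vertices and are matched in number by minima, and conclude $psb[K]\le (s[K]-1)/2$. The parity split is unnecessary---projecting down a stick already yields at most $\lfloor (s-1)/2\rfloor \le (s-1)/2$ maxima whether $s$ is odd or even---but it does no harm.
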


\begin{proof}
Let $K$ be in minimal stick conformation and let $v\in S^2$ be a vector parallel to some edge of $K$. Then in the projection $P_{v}(K)$ only $s[K]-1$ sticks are visible since we have projected directly down the chosen stick. Because in a generic stick projection extrema will only occur at the vertices, and half the extrema must be minima, our projection realizes at most $\frac{s[K]-1}{2}$ maxima relative to any vector in the plane.  Further, there must be some vector $w\in S^1$ relative to which we see at least $psb[K]$ maxima.  After minimizing over all conformations, we obtain the desired inequality.
\end {proof}

We now turn to the relationship between projective superbridge number and spiral index. Since a projection of a knot in spiral form has the same number of maxima over all directions in the plane, the greatest number of maxima in that projection is equal to the spiral number of that projection. After we minimize over all projections and all conformations, we see $psb[K] \leq sp[K]$. Because projective superbridge serves as a lower bound on stick number, we are interested in knots, $K$, such that $b[K] < psb[K]$. For this case we see that $s[K] \geq 2(b[K]+1) +1 = 2b[K] + 3$. To this end, we have the following proposition.

\begin{proposition}
\label{prop:b=psbimpliessp=b}
If $K$ is a knot such that $b[K] = psb[K]$, then $sp[K]=b[K]$.
\end{proposition}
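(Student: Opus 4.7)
The plan is to exploit the common value $n := b[K] = psb[K]$ to force the inner $\min_w$ and $\max_w$ to coincide, and then invoke the equivalence already stated in Section 2 between "equal number of maxima in every planar direction" and "spiral form."

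First I would fix a conformation $K_0 \in [K]$ and a vector $v \in S^2$ that achieve the outer two minima in the definition of $psb[K]$, so that
$$\max_{w\in S^1} \mu(w,P_v K_0) \;=\; psb[K] \;=\; n.$$
Next, I would use the definition of bridge index as the triple minimum: since $b[K]$ is the minimum over all conformations, all $v$, and all $w$, the number $\mu(w, P_v K_0)$ is bounded below by $b[K] = n$ for every $w \in S^1$. Hence
$$n \;=\; b[K] \;\le\; \min_{w\in S^1} \mu(w,P_v K_0) \;\le\; \max_{w\in S^1} \mu(w,P_v K_0) \;=\; n,$$
which forces $\mu(w, P_v K_0) = n$ for every direction $w \in S^1$.

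Now I would apply the equivalence noted just after the definition of spiral form (and proved in Section 4): a smooth knot projection is in spiral form if and only if it has the same number of local maxima in every direction of the plane. Thus $P_v(K_0)$ is in spiral form with spiral number $n$, giving $sp[K] \le n = b[K]$. Since the inequality $b[K] \le sp[K]$ was already established in Section 2, we conclude $sp[K] = b[K]$.

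The only real subtlety I anticipate is making sure the conformation $K_0$ and direction $v$ achieving the outer minima in $psb[K]$ actually exist (rather than only being infima) and that the projection $P_v(K_0)$ is smooth enough to apply the Section 4 equivalence. If the infimum is not attained, one can instead argue with a minimizing sequence and pass to a limit, or rephrase the equality $b[K] = psb[K]$ using $\varepsilon$-approximations: for each $\varepsilon > 0$ there is a conformation with $\mu(w,\cdot)$ uniformly equal to $n$ up to $\varepsilon$-perturbation, which after a small smoothing yields a spiral projection with $n$ maxima. Everything else is an immediate unpacking of definitions.
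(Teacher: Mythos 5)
Your proposal is correct and follows essentially the same route as the paper's proof: both pin down a projection where $\min_w \mu = \max_w \mu$, conclude that $\mu(w,\cdot)$ is constant over $w\in S^1$, invoke the equivalence between constant directional maxima count and spiral form to get $sp[K]\le b[K]$, and finish with the standing inequality $b[K]\le sp[K]$. Your derivation that the projection realizing $psb[K]$ automatically realizes $b[K]$ (via the triple-minimum definition) is in fact slightly more careful than the paper's bare assertion that a single projection realizes both.
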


\begin{proof}
Let $b[K] = psb[K]$. Then there is a projection $P(K)$ of $K$ that realizes both $b[K]$ and $psb[K]$. Thus 
$$\min_{w \in S^1} \mu(w,P(K)) = \max_{w \in S^1} \mu(w,P(K))$$
This implies  $\mu(w,C)$ is constant over all $w \in S^1\subset P$. Hence, $P(K)$ is in spiral form with $\mu(w,P(K))= b[K]$ spirals so $sp[K] \le b[K]$. Since the reverse inequality always holds, the proposition follows.
\end{proof}

An example will help to illustrate the significance of the proposition above.

\begin{example}
\end{example}
Consider $4_1 \# n3_1$, the composition of the figure eight knot with n copies of the trefoil. This knot can be realized with $7+2n$ sticks. 
Since both bridge index and braid index are subadditive under composition and $b[3_1]=b[4_1]=\beta[3_1]=2$, $\beta[4_1]=3$, we have
$$b[4_1 \# n3_1]=n+2 $$
$$\beta[4_1 \# n3_1]=n+3.$$
If one could show  $b[4_1 \# n3_1] < psb[4_1 \# n3_1]$  or $b[4_1 \# n3_1] < sp[4_1 \# n3_1]$ it would follow that $s[4_1 \# n3_1]\ge 7+2n$. This would provide the necessary lower bound in order to obtain the stick indices for this infinite family of knots. Note that we know that this is the case when $n=1$.\\

Thus proving a subaddivity rule for spiral number under composition akin to that of braid index or bridge index would lead not only to a better understanding of that invariant but also to additional tools for calculating stick number.

Although a similar subadditivity result for projective superbridge index would give us useful bounds, we can in fact show by counterexample that projective superbridge index is not subadditive  under composition.

\begin{proposition}
The projective superbridge number of a knot is not always additive minus one under composition
\end{proposition}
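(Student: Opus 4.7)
The plan is to exhibit an explicit pair of knots $(K_1, K_2)$ for which $psb[K_1 \# K_2] \neq psb[K_1] + psb[K_2] - 1$, and my candidate is $K_1 = K_2 = 4_1$, the figure-eight knot. The hope is to show the difference is negative, i.e.\ the composite has \emph{smaller} projective superbridge number than the putative sum-minus-one formula predicts.

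\textbf{Step 1: Compute $psb[4_1]$.} The figure-eight satisfies $b[4_1] = 2$ and $\beta[4_1] = 3$. Combined with the earlier argument in this section that rules out 2-curly knots, this forces $sp[4_1] = 3 \neq b[4_1]$. The contrapositive of Proposition \ref{prop:b=psbimpliessp=b} then yields $psb[4_1] \neq b[4_1]$, so $psb[4_1] \geq 3$; coupled with the general inequality $psb \leq sp$, this pins down $psb[4_1] = 3$.

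\textbf{Step 2: Bound $psb[4_1 \# 4_1]$ from below by 4.} By subadditivity-minus-one for braid index, $\beta[4_1 \# 4_1] = 5$, so the composite is not a $3$-braid knot. It is also composite, so Theorem \ref{altprime} prevents it from being $3$-curly. Hence $sp[4_1 \# 4_1] \neq 3$, and a second application of the contrapositive of Proposition \ref{prop:b=psbimpliessp=b} gives $psb[4_1 \# 4_1] \neq b[4_1 \# 4_1] = 3$, so $psb[4_1 \# 4_1] \geq 4$.

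\textbf{Step 3 and the main obstacle.} Finally, one must exhibit a specific conformation of $4_1 \# 4_1$ whose projection has at most 4 local maxima in every planar direction $w \in S^1$, thereby witnessing $psb[4_1 \# 4_1] \leq 4$. Combined with Step 2, this gives $psb[4_1 \# 4_1] = 4$, which is strictly less than $psb[4_1] + psb[4_1] - 1 = 5$, producing the counterexample. The chief difficulty lies precisely in this construction: the most natural attempt, gluing two $psb$-realizing (spiral) conformations of $4_1$ along exterior arcs as in the earlier subadditivity-of-$sp$ proof, yields a projection in spiral form with $3 + 3 - 1 = 5$ maxima in every direction, which is exactly what we are trying to beat. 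The idea is instead to take non-spiral conformations of each $4_1$ (with $\max_w \mu = 3$ but $\min_w \mu = 2$) and rotate one relative to the other before composition so that each component's ``extra-max'' directions line up with the other's ``few-max'' directions, capping the composite's global maximum at 4. Verifying the count for every $w$ is the delicate part and likely requires an explicit picture together with a case analysis over the direction $w$.
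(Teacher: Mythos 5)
Your Steps 1 and 2 are sound: the computation $psb[4_1]=3$ via the nonexistence of $2$-curly knots and the contrapositive of Proposition \ref{prop:b=psbimpliessp=b}, and the lower bound $psb[4_1\#4_1]\geq 4$ via primeness of $3$-curly links (Theorem \ref{altprime}) and $\beta[4_1\#4_1]=5$, are both correct applications of results in this paper. But the argument has a genuine gap at exactly the point you flag: Step 3, the upper bound $psb[4_1\#4_1]\leq 4$, is never established, and it is the crux of the whole counterexample. The proposed construction --- composing two non-spiral conformations of $4_1$ rotated so that the directions realizing $3$ maxima in one copy meet only directions realizing $2$ maxima in the other --- is only a heuristic. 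You would need to verify that (i) the sets of directions $w$ with $\mu(w)=3$ for each factor can be made disjoint (these sets are antipodally symmetric closed subsets of $S^1$, and nothing rules out that each must occupy more than half the circle), (ii) the connecting arcs of the connect sum contribute no additional extrema in any direction, and (iii) the composite projection genuinely satisfies $\mu(w,P(K_1\#K_2))\leq \mu(w,P(K_1))+\mu(w,P(K_2))-1$ for every $w$ simultaneously. None of this is done, and it is not even clear that $psb[4_1\#4_1]=4$ is true; without Step 3 you have only $4\leq psb[4_1\#4_1]\leq 5$, which is consistent with additivity minus one.

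The paper's proof avoids this difficulty entirely by never computing the projective superbridge number of a composite knot. It takes $K=9_{46}$, for which $psb[K]=4$ and $s[K]=9$, assumes $psb[K\#K]=4+4-1=7$, and feeds this into the inequality $s\geq 2\,psb+1$ to get $s[K\#K]\geq 15$; this contradicts the upper bound $s[K\#K]\leq 9+9-4=14$ obtained from a stick-saving composition. In other words, the paper converts the problem into a stick-number estimate where both bounds are already available, whereas your route requires constructing and certifying an explicit optimal projection of a composite knot --- a task the paper's own open questions suggest is delicate. If you want to salvage your approach, you must supply the explicit conformation and the full case analysis over $w\in S^1$; otherwise the proposition remains unproved.
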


\begin{proof}
Consider the knot $K=9_{46}$, which has the following properties:
\begin{eqnarray*}
b[K]&=&3\\
psb[K]&=& 4\\
s[K]&=&9
\end{eqnarray*}
We know $sp[K]=4$ ($K$ is not a $3$-spiral knot and it is possible to put it in $4$-spiral form). Apply \ref{prop:b=psbimpliessp=b} to get $psb[K]=4$.  
Assume for sake of contradiction that $psb[K_1 \# K_2]=psb[K_1]+psb[K_2]-1$.
Then $$psb[9_{46}\# 9_{46}]=4+4-1=7$$
Since $2psb[K]+1\leq s[K]$,
we have $$2(7)+1 =15 \leq s[9_{46}]$$
On the other hand $9_{46}$ satisfies the conditions described in \cite{Exteriorsticks} so that we can save four sticks in composition to obtain the following upper bound on stick number:
$$s[K]\leq 9+9-4=14$$
This implies that $15 \leq s[K] \leq 14$, providing a contradiction.
\end{proof}

\section{Milnor's Curvature-Torsion Invariant}
We now relate the spiral index and the projective superbridge number of a knot to the curvature-torsion invariant defined by Milnor in \cite{Milnorcurves}.  
For a knot $K$, denote the number of inflection points in the projection $P_{v}(K)$ by $\nu(P_{v}(K))$, and the number of \emph{extrema} in that projection with respect to a vector $w$ by $\hat{\mu}(w,P_{v}(K))$. Notice that for any projection $P_{v}(K)$ and for any $w$, $\hat{\mu}(w,P_{v}(K)) = 2\mu(w,P_{v}(K))$.

Milnor defined the invariant \emph{curvature-torsion} of a knot $K$ as
$$(\kappa+\tau)[K] = \inf_{K \in [K]}\int_{K}(\kappa+ |\tau|)$$
More recently, Honma and Saeki proved the following theorem, which characterizes $(\kappa + \tau)[K]$ in terms of the number of critical points of the knot projection.
\begin{theorem}
\label{characterizekappatau}
\cite{Honma} Let $K$ be a knot. Then  
\[\frac{1}{\pi}(\kappa+\tau)[K]= \min_{K \in [K]} \min_{v \in S^2} \min_ {w \in S^1} (\hat{\mu}(w,P_{v}(K)) + \nu(P_{v}(K)))\]
\end{theorem}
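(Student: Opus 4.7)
The plan is to prove the two inequalities separately, using integral geometry for the lower bound and an explicit concentration-of-curvature construction for the upper bound.

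For the lower bound $\frac{1}{\pi}(\kappa+\tau)[K] \geq \min_{K}\min_v\min_w(\hat\mu + \nu)$, fix any conformation of $K$. Milnor's total curvature formula for curves in $\mathbb{R}^3$ gives
$$\int_K \kappa = \pi \cdot \mathrm{avg}_{u\in S^2}\hat\mu(u,K).$$
By the rotational-symmetry identity $\mathrm{avg}_{u\in S^2}f(u)=\mathrm{avg}_{v\in S^2}\mathrm{avg}_{w\in S^1\cap v^{\perp}}f(w)$, together with the observation that $\hat\mu(w,K)=\hat\mu(w,P_v(K))$ whenever $w\perp v$ (since $P_v(K)(t)\cdot w = K(t)\cdot w$ in that case), this rearranges to
$$\tfrac{1}{\pi}\int_K \kappa = \mathrm{avg}_{v\in S^2}\,\mathrm{avg}_{w\in S^1\cap v^{\perp}}\,\hat\mu(w,P_v(K)).$$
Independently, Crofton's formula on $S^2$ applied to the binormal curve $B\colon S^1\to S^2$, whose length is $\int_K |\tau|$ since $B'=-\tau N$, yields
$$\tfrac{1}{\pi}\int_K |\tau| = \mathrm{avg}_{v\in S^2}\nu(P_v(K)),$$
because inflection points of $P_v(K)$ occur precisely where the osculating plane contains $v$, i.e.\ where $B\perp v$, so $\nu(P_v(K))$ counts the intersections of $B$ with the great circle $v^{\perp}\cap S^2$. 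Adding the two averaged identities, choosing $v_0\in S^2$ where the pointwise value of the sum falls at or below the $v$-average, and then choosing $w_0\in S^1\cap v_0^{\perp}$ where $\hat\mu(w_0,P_{v_0}(K))$ falls at or below its $w$-average, produces a witness pair satisfying $\pi(\hat\mu(w_0,P_{v_0}(K))+\nu(P_{v_0}(K)))\leq\int_K(\kappa+|\tau|)$. Taking the infimum over all conformations of $K$ gives the desired inequality.

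For the upper bound $\frac{1}{\pi}(\kappa+\tau)[K]\leq\min_{K,v,w}(\hat\mu+\nu)$, given a conformation $K_0$ and directions $v_0,w_0$ realizing values $\hat\mu_0$ and $\nu_0$, I would construct a sequence of conformations $K_n$ ambient-isotopic to $K_0$ with $\int_{K_n}(\kappa+|\tau|)\to\pi(\hat\mu_0+\nu_0)$. The model curve is nearly planar in $P_{v_0}$ outside small neighborhoods of the $\hat\mu_0+\nu_0$ special points. Near each of the $\hat\mu_0$ extrema of the $w_0$-height function, the curve makes a tight smooth U-turn contributing $\pi$ to $\int\kappa$ as the tangent rotates by $\pi$. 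Near each of the $\nu_0$ inflection points of $P_{v_0}(K_0)$, the binormal sweeps across the great circle $v_0^{\perp}$ along a short geodesic arc from $+v_0$ to $-v_0$ on $S^2$, contributing $\pi$ to $\int|\tau|$. Shrinking the scale of these $\hat\mu_0+\nu_0$ local bumps to zero gives the desired limit.

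The main obstacle is the upper-bound construction: one must furnish explicit $C^2$ local models at extrema and inflection points that glue into a closed smooth curve ambient-isotopic to $K_0$, with error terms that vanish as $n\to\infty$, and one must verify that the concentration of torsion at inflection points does not inadvertently inflate $\int\kappa$ beyond $\pi\hat\mu_0$. A secondary technical point is that the binormal $B$ and Crofton's formula require $\kappa>0$ everywhere; this is handled by first proving the lower-bound inequality on the residual set of conformations with nowhere-vanishing spatial curvature, then extending by approximation.
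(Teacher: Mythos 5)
The paper does not prove this statement at all: it is quoted from Honma and Saeki \cite{Honma} and used as a black box, so there is no internal proof to compare against. Judged on its own terms, your lower bound is essentially the right integral-geometric argument: Milnor's averaging formula $\int_K\kappa=\pi\,\mathrm{avg}_{u\in S^2}\hat{\mu}(u,K)$, the splitting of the $S^2$-average into an average over projection directions $v$ followed by an average over $w\in S^1\cap v^{\perp}$, the identification of inflection points of $P_v(K)$ with crossings of the binormal indicatrix through the great circle $v^{\perp}$ (since the planar curvature of $P_v(K)$ is proportional to $(K'\times K'')\cdot v$), the spherical Crofton formula, and the two successive ``at-or-below-average'' choices of $v_0$ and then $w_0$ all fit together as you describe.

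There are, however, two genuine gaps. First, the ``secondary technical point'' you defer is load-bearing. For a conformation containing planar arcs through an inflection point, or straight segments near crossings, the spatial curvature vanishes somewhere, the binormal is discontinuous, and the naive integral $\int|\tau(s)|\,ds$ misses the jumps; one can then build nearly planar conformations of, say, $4_1$ for which $\int\kappa+\int|\tau|$ computed naively falls below $\pi\min(\hat{\mu}+\nu)$. The inequality survives only because total torsion must be defined in Milnor's robust sense (via inscribed polygons, equivalently as the length of the binormal indicatrix including its jumps), and your plan to ``extend by approximation'' from nowhere-flat conformations is precisely the assertion that needs proof, since the naive torsion functional is not lower semicontinuous there. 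Second, the upper bound is only an outline, and its accounting is the hard part of the theorem: you must (i) isotope the minimizing diagram so the arcs between the $\hat{\mu}_0$ extrema are nearly straight, making their curvature contribution $o(1)$ without disturbing the count $\hat{\mu}_0+\nu_0$; (ii) show the over/under excursions at crossings cost $o(1)$ in both curvature and (robustly defined) torsion; and (iii) verify that each local ``osculating-plane flip'' at an inflection point costs exactly $\pi+o(1)$ of torsion and only $o(1)$ of curvature, and that the glued curve is ambient-isotopic to $K$. As it stands you have one inequality modulo the regularization issue, and a plausible but unproven sketch of the other.
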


This characterization of $(\kappa + \tau)[K]$ allows us to relate this invariant to projective superbridge and allows us to answer an open question about $(\kappa + \tau)[K]$ that was posed by Honma and Saeki.

\begin{theorem} 
\label{psbltecurvtors}
For a knot $K$, we have 
\[2\pi psb[K]\leq(\kappa + \tau)[K] \]
\end{theorem}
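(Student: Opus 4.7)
The plan is to combine Honma and Saeki's characterization (Theorem \ref{characterizekappatau}) with a lemma that controls how the number of critical points of a height function on a plane curve can vary with the height direction $w$. Fix a conformation $K_*$ of $K$, a direction $v_* \in S^2$, and a vector $w_* \in S^1$ achieving the minimum in Theorem \ref{characterizekappatau}, so that
\[\frac{1}{\pi}(\kappa+\tau)[K] = \hat{\mu}(w_*, P_{v_*}(K_*)) + \nu(P_{v_*}(K_*)).\]
Since $\nu$ does not depend on $w$, the choice of $w_*$ in fact minimizes $\hat\mu(\cdot, P_{v_*}(K_*))$ over $w$. On the other hand, by definition of projective superbridge, $psb[K] \leq \max_{w \in S^1} \mu(w, P_{v_*}(K_*))$, so using $\hat\mu = 2\mu$ we obtain $2\,psb[K] \leq \max_w \hat\mu(w, P_{v_*}(K_*))$. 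Thus it will suffice to sandwich $\max_w \hat\mu$ between $\min_w \hat\mu$ and $\min_w \hat\mu + \nu$.

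That is exactly the key lemma: for any generic smooth knot projection $P$,
\[\max_{w \in S^1} \hat\mu(w, P) - \min_{w \in S^1} \hat\mu(w, P) \leq \nu(P).\]
To prove this, I would parameterize $P$ by arc length and consider the unoriented tangent direction map $\theta \colon S^1 \to \mathbb{R}/\pi\mathbb{Z}$. A height function in direction $w$ has a critical point exactly where $\theta(s)$ equals the direction $\phi \in \mathbb{R}/\pi\mathbb{Z}$ perpendicular to $w$, so $\hat\mu(w, P) = |\theta^{-1}(\phi)|$. As a function of $\phi \in S^1$, this preimage count is piecewise constant and changes only at critical values of $\theta$. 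Since $\theta'$ equals the signed planar curvature, the critical points of $\theta$ are exactly the inflection points of $P$. At an $s$-inflection, $\theta$ attains a local extremum, so $|\theta^{-1}|$ jumps by $\pm 2$ across the corresponding critical value; at a $u$-inflection, $\theta$ remains locally monotone, so no jump occurs. Every jump therefore has absolute value $2$ and is contributed by some $s$-inflection; since the signed jumps around $S^1$ sum to zero, the max minus the min is at most the number of $s$-inflection points, which is at most $\nu(P)$.

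Applying the lemma at $P_{v_*}(K_*)$ yields $\max_w \hat\mu(w, P_{v_*}(K_*)) \leq \hat\mu(w_*, P_{v_*}(K_*)) + \nu(P_{v_*}(K_*))$, and chaining with $2\,psb[K] \leq \max_w \hat\mu(w, P_{v_*}(K_*))$ gives $2\,psb[K] \leq \frac{1}{\pi}(\kappa+\tau)[K]$, i.e.\ $2\pi\,psb[K] \leq (\kappa+\tau)[K]$. The main obstacle is proving the lemma rigorously, and in particular justifying that $u$-inflection points contribute no jump to $\hat\mu$ and handling non-generic configurations (e.g.\ two inflection tangents in the same direction, or $w_*$ landing precisely at a jump of $\hat\mu$, which can be addressed by a small perturbation or a limiting argument). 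A secondary technical point is that Theorem \ref{characterizekappatau} is stated as an infimum of integrals but a minimum over critical-point counts; if the minimum is not attained, one runs the same argument along a minimizing sequence of conformations and passes to the limit.
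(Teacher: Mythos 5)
Your proposal is correct and follows essentially the same route as the paper: both reduce the theorem, via Theorem \ref{characterizekappatau}, to the key inequality $\max_{w}\hat{\mu}(w,P)-\min_{w}\hat{\mu}(w,P)\leq \nu(P)$, established by showing that the extrema count can only jump (by $2$) at directions normal to $s$-inflection points. Your preimage-counting argument for the tangent-direction map $\theta\colon S^1\to\mathbb{R}/\pi\mathbb{Z}$ is a cleaner packaging of the paper's implicit-function-theorem lemma, and correctly flags the same technical points (cusp-free/generic minimizing projections, attained minima) that the paper defers to Honma and Saeki.
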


Let $P(K)$ be a smooth oriented projection of a knot conformation $K$ into some plane $P$. Then extrema in this projection are measured relative to vectors $w\in S^1\subset P$. We call $v\in S^1$ a \emph{local extrema changing vector} at $x\in P(K)$ if for any neighborhood $x\in B\subset P(K)$, 
\begin{align}\label{extremachanging}\hat{\mu}(v+\varepsilon, B)= \hat{\mu}(v-\varepsilon, B)\pm 2n \end{align}
for all sufficiently small $\varepsilon$. We will call $n$ the local multiplicity of $v$ at $x$. We call $v$ an extrema changing vector with multiplicity $n$ if (\ref{extremachanging}) holds for $B=P(K).$

Denote the unit tangent vector to $P(K)$ at $p$ by $T_p$. Define $N_p$ to be the rotation of $T_p$ by $\frac{\pi}{2}$ in the counterclockwise direction. Given any point $p \in P(K)$, $p$ is a \emph{critical point} with respect to a vector $w$ when $T_p$ is orthogonal to $w$. Assuming that $T_p$ is orthogonal to $w$, note that if $T_q$, for a point $q\neq p$, is some oriented angle $\theta$ away from $T_p$ then $q$ is a critical point with respect to some $\tilde{w}$ which is $\theta$ away from $w$. This corresponds to rotating the Frenet frame by the angle $\theta$. Further, if our curve has nonzero curvature at $q$ then $q$ is an extremum with respect to $\tilde{w}$. To prove our theorem we will need the following lemma.

\begin{lemma}
Let $\alpha$ be a immersed curve in $\mathbb{R}^2$ with no $u$-inflection points. Then for each extrema changing vector $v_0$ with multiplicity $n$ there are $n$ $s$-inflection points $x_i$ such that $v_0= \pm N_{x_i}$. Further, if $x$ is an $s$-inflection point on $\alpha$, then $\pm N_{x}$ are local extrema changing vectors.
\end{lemma}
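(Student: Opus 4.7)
The plan is to translate the problem into the language of the tangent map $T\colon I\to S^1$ of $\alpha$ and analyze how the critical points of the projection $\alpha(t)\cdot v$ vary as $v\in S^1$ moves. Parametrize $\alpha$ by arc length. A point $\alpha(t_0)$ is a critical point with respect to $v$ if and only if $T(t_0)\cdot v=0$, equivalently $v=\pm N_{\alpha(t_0)}$. When in addition $\kappa(t_0)\neq 0$, the second derivative of $\alpha(t)\cdot v$ at $t_0$ is $\kappa(t_0)\,N(t_0)\cdot v\neq 0$, so the critical point is nondegenerate and its type (max or min) is determined by the sign of $\kappa(t_0)\,N(t_0)\cdot v$. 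The implicit function theorem applied to $T(t)\cdot v=0$ then shows that such a critical point persists smoothly, and retains its type by continuity, for $v$ in a neighborhood of $\pm N_{\alpha(t_0)}$. Hence $\hat{\mu}(v,\alpha)$ can only jump when $v$ crosses $\pm N_x$ for a point $x$ with $\kappa(x)=0$, and by the no $u$-inflection hypothesis these are exactly the $s$-inflection points of $\alpha$.

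The local analysis at an $s$-inflection point $x$ is the heart of the argument. View $T$ as a smooth map into $S^1$: its angular velocity is $\kappa$, so a sign change of $\kappa$ at $x$ means that $T$ reverses direction at $T(x)\in S^1$, producing a local fold on the tangent circle. Set $v_0=\pm N_x$, so that $T(x)\perp v_0$. For $v$ near $v_0$, the critical points of $\alpha(t)\cdot v$ near $x$ correspond to preimages under $T$ of the two unit vectors $\pm v^\perp$; as $v$ crosses $v_0$, a pair of such preimages (one on each side of $x$) either coalesces at $x$ and disappears, or emerges from $x$. The two members of the pair lie on opposite sides of $x$ in the parameter interval, where $\kappa$ has opposite signs, so the pair consists of one maximum and one minimum of $\alpha(t)\cdot v$. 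This contributes a local jump of $\pm 2$ to $\hat{\mu}$, and simultaneously proves the second assertion of the lemma: if $x$ is an $s$-inflection point, then $\pm N_x$ is a local extrema changing vector of local multiplicity one.

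To finish, I assemble the local jumps into the global count. If $v_0$ is an extrema changing vector with multiplicity $n$ and $x_1,\dots,x_k$ are the $s$-inflection points with $v_0=\pm N_{x_i}$, then by the first two paragraphs the jump $\hat{\mu}(v_0+\varepsilon,\alpha)-\hat{\mu}(v_0-\varepsilon,\alpha)$ is the signed sum of the $\pm 2$ contributed at each $x_i$, since the $x_i$ lie in disjoint parameter neighborhoods that do not interact. The main obstacle, and the only real technical nuisance, is the bookkeeping of signs so that cancellation does not undercount the $x_i$: the orientation of the fold of $T$ at $x_i$, together with the chosen sign in $v_0=\pm N_{x_i}$, determines whether the pair is being created or destroyed as $v$ passes $v_0$ in a fixed direction, and the hypothesis that the total jump has the single-signed form $\pm 2n$ of~(\ref{extremachanging}) forces the local contributions to line up. This yields $k=n$ and completes the proof.
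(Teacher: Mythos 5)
Your proposal follows essentially the same route as the paper's proof: both apply the implicit function theorem to $\alpha'(t)\cdot v=0$ to show that critical points with nonzero curvature persist (with their type) under small rotations of $v$, so that extrema changes can only occur at $\pm N_x$ for $x$ an $s$-inflection point, and both then carry out the same local analysis at such a point (your ``fold of the tangent map'' is the paper's reversal of the Frenet frame's rotation) to show that a max--min pair is created or destroyed there, giving a local jump of $\pm 2$. The only substantive difference is that you explicitly flag the possible cancellation of the $\pm 2$ contributions in the final count --- a point the paper's proof passes over silently --- though your claim that the definition of multiplicity ``forces the local contributions to line up'' is not really a proof of that; in both write-ups the honest conclusion is that the number of such $s$-inflection points is at least $n$, which is the direction of the inequality actually used in the proof of Theorem \ref{psbltecurvtors}.
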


\begin{proof}
Let $\alpha(t)$ be an immersed curve with no $u$-inflection points. Given some extrema changing vector, $v_0\in S^1$, consider the set of points, $\alpha(t_1)=x_1,...,\alpha(t_n)=x_n$ such that $v_0=\pm N_{x_i}$. These are the critical points on $\alpha$ with respect to $v_0$. Let $\varepsilon>0$ and let $B(v_0,\varepsilon)$ be a neighborhood of $v_0$ in $S^1$ of all vectors whose angle from $v_0$ is less than $\varepsilon$. We first show that for any $\alpha(t_0)=y\neq x_i$, there is a neighborhood $B_y\subset \alpha$ of $y$ such that for any $z\in B_y$, $z$ is not a critical point with respect to any $v\in B(v_0,\varepsilon)$.

Consider the continuous function
$$g:S^1\times S^1 \rightarrow \mathbb{R}$$
defined by $g(t,v)=\alpha'(t)\cdot v$.
Let $y=\alpha(t_0)\neq x_i$ for any $i$. Then $g(t_0,v_0)\neq 0$ and there exists an open product neighborhood $U\times V$ of $(t_0,v_0)$ such that $g(s,v)\neq 0$ for any $(s,v)\in U\times V$. Set $B_y=\alpha(U)$. Therefore there is a neighborhood of $\alpha(t_0)$ such that none of the points in that neighborhood are critical points with respect to any $v$ in some neighborhood of $v_0$. Thus, in order to locate the critical points of $\alpha$ with respect to vectors close to $v_0$ it suffices to restrict ourselves to neighborhoods of the points $x_i$.

Now we prove that the only possible extrema changing points with respect to $v_0$ are places where curvature is zero. Reorder so that $\alpha(t_1)=x_1,...,\alpha(t_k)=x_k$ are all the critical points which are not inflection points. Using the same function $g$ as above, $g(t_i,v_0)=0$  and 
$$\frac{\partial g}{\partial t}(t_i,v_0)=\alpha''(t_i)\cdot v_0=\pm\kappa(\alpha(t_i)) \neq0$$
for $1\leq i \leq k$. Therefore, by the implicit function theorem there are functions
$$h_i: U \rightarrow V_i$$
where $U$ is a neighborhood of $v_0$ and $V_i$ is a neighborhood of $t_i$ such that $g(v,h_i(v))=0$ for all $v\in U$.  Thus we have a continuous, one to one correspondence between vectors $v$ in a neighborhood of $v_0$ and points $x$ in a neighborhood of $x_i$ such that $x$ is an extrema with respect to $v$. Then on either side of $v_0$ there is one extrema with respect to that vector in $\alpha(V_i)$ for each $1\leq i \leq k$, which implies that $v_0$ is not locally extrema changing near $x_i$ for $1\leq i \leq k$. By our first argument all extrema changes must occur in neighborhoods of the $x_i$, $1\leq i \leq n$. Therefore the bridge change across $v_0$ is at most $2(n-k)$. (Note $n-k$ is the number of inflection points whose normal is $\pm v$.)

We now show that the normal vector to an $s$-inflection point on $\alpha$ is an extrema changing vector with local multiplicity $2$. Since any extrema changing vector must be the normal or antinormal of an inflection point and the normal of each inflection point changes the extrema count locally by $2$, this will imply that a extrema changing vector with multiplicity $n$ will be the normal or antinormal of $n$ inflection points. 

At an $s$-inflection point, $\alpha(t_i)=x_i$, curvature changes signs. Without loss of generality assume $\kappa(t_i-\varepsilon)<0$ and $\kappa(t_i +\varepsilon)>0$ for all sufficiently small $\varepsilon >0$. Then, in this neighborhood of $t_i$ the Frenet frame rotates clockwise as $t$ increases through parameter in $(t_i-\varepsilon,t_i)$ and counterclockwise as $t$ increases through $(t_i,t_i+\varepsilon)$. Then all points within this neighborhood of $t_i$ have  normal vectors counterclockwise of $N_{x_i}$. Thus in any small neighborhood of $N_{x_i}$, vectors clockwise to $N_{x_i}$ are not the normal vectors to any points $\alpha(t)$ for all $t\in(t-\varepsilon,t+\varepsilon)$ and each vector in this neighborhood counterclockwise of $v$ is the normal vector to exactly two extrema in $(t-\varepsilon,t+\varepsilon)$. Then since the normal vector to each inflection point on the curve corresponds to a local extrema change of two and any bridge changing vector must be the normal vector of some inflection point, the lemma follows.
\end{proof}

\begin{proof}[Proof of Theorem \ref{psbltecurvtors}] 
We begin by noting that the projection that minimized $\hat{\mu}(w,P_{v}(K)) + \nu(P_{v}(K))$ cannot have a $u$-inflection point. Otherwise, we may eliminate the inflection point, $x$, by a planar isotopy that gives $x$ the sign of curvature corresponding to the curvature in some deleted neighborhood of $x$.

Now consider a knot projection that minimizes $\hat{\mu}(w,P_{v}(K)) + \nu(P_{v}(K))$ and contains no cusps. Such a projection is guaranteed to exist by the results of Honma and Saeki \cite{Honma}. Then we may parameterize our knot projection by an immersed curve $\alpha$ with no $u$-inflection points. By the previous lemma each change in number of extrema by $2$ along $\alpha$ has a corresponding inflection point. Consider $v$ such that $\hat{\mu}(v,P(K))$ is minimal among $v \in S^1$. Because $\hat{\mu}(v,P(K))=\hat{\mu}(-v,P(K))$, the maximal number of extrema occurs between $v$ and $-v$. To get to the direction, say $w$, with the maximal number of extrema, we need one inflection point for each pair of extrema. Since we gain the same number of extrema between $v$ and $w$ and between $-v$ and $w$, we get that 
$$\max_{w \in S^1} \hat{\mu}(w,\alpha)-\min_{w \in S^1} \hat{\mu}(w,\alpha)\le \nu(\alpha)$$
Then we have 
\[2\max_{v \in S^1} \mu(v,\alpha) = \max_{v \in S^1} \hat{\mu}(v,\alpha) \leq \min_{K \in [K]} \min_{v \in S^2} \min_ {w \in S^1} (\hat{\mu}(w,P_{v}(K)) + \nu(P_{v}(K)))\]
By the previous theorem, the right side of this equality equals $\frac{(\kappa + \tau)[K]}{\pi}$. Further, minimizing the left side of the inequality over both projections and conformation of $K$ provides $2psb[K]$. Hence, we obtain the desired inequality.
\end{proof} 

The bound on curvature-torsion given in \cite{Honma} and the previous theorem provide the following bound for projective superbridge in terms of bridge index.

\begin{proposition}
For a knot $K$, $psb[K] \leq 2b[K] - 1$. 
\end{proposition}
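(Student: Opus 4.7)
The plan is to sandwich $psb[K]$ between two estimates: the lower bound on $(\kappa+\tau)[K]$ just proved in Theorem \ref{psbltecurvtors}, and an upper bound on $(\kappa+\tau)[K]$ in terms of $b[K]$ supplied by Honma and Saeki. The proof is therefore extremely short once those two results are in hand, and the whole content lies in citing them correctly.

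Concretely, I would proceed in three steps. First, apply Theorem \ref{psbltecurvtors} to get
$$2\pi\, psb[K] \;\leq\; (\kappa+\tau)[K].$$
Second, invoke the bound from \cite{Honma}, which states $(\kappa+\tau)[K] \leq 2\pi(2b[K]-1)$. Chaining the two inequalities gives $2\pi\, psb[K] \leq 2\pi(2b[K]-1)$, and dividing by $2\pi$ yields $psb[K] \leq 2b[K]-1$.

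The only real obstacle is justifying the Honma--Saeki upper bound, which in our formulation (via Theorem \ref{characterizekappatau}) amounts to exhibiting a conformation, a direction of projection $v$, and a height direction $w$ for which $\hat{\mu}(w,P_v(K)) + \nu(P_v(K)) \leq 4b[K]-2$. Should one wish to reprove it rather than cite it, the natural recipe is to start with a bridge presentation realizing $b[K]$ maxima in some height direction $w$ (contributing $\hat{\mu}(w,P_v(K)) = 2b[K]$), project generically, and argue that the smooth arcs joining the $b[K]$ overstrands to the $b[K]$ understrands can be arranged so that the total number of $s$-inflection points is at most $2b[K]-2$; two of the transitions at the extreme ``outside'' of the presentation can be realized without introducing an inflection. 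This is exactly the kind of projection Honma and Saeki construct, so we simply quote their inequality and leave the explicit construction to the reference.
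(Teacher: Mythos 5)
Your proposal is correct and follows essentially the same route as the paper: combine Theorem \ref{psbltecurvtors} with the Honma--Saeki inequality $(\kappa+\tau)[K] - 2\pi \leq 2(\kappa[K]-2\pi)$, which becomes $(\kappa+\tau)[K] \leq 2\pi(2b[K]-1)$ after substituting Milnor's identity $\kappa[K]=2\pi b[K]$ (a step the paper makes explicit and you fold into the quoted bound). The extra sketch of how one might reprove the Honma--Saeki estimate is not needed and does not appear in the paper.
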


\begin{proof}
Honma and Saeki \cite{Honma} prove that for every knot $K$, 
\[(\kappa + \tau)[K] - 2\pi \leq 2(\kappa[K] -2\pi)\]
where $\kappa[K]$ denotes the total curvature of $[K]$. By a result of Milnor \cite{Milnorcurves}, we know that $\kappa[K] = 2\pi b[K]$.  Applying this and Theorem \ref{psbltecurvtors} to the above inequality, we get that
\[2\pi (psb[K]) -2\pi \leq 2(2\pi b[K] - 2\pi)\]
and obtain $psb[K] \leq 2b[K] - 1$.
\end{proof} 
 
We now relate the curvature-torsion of a knot to its spiral index. 

\begin{proposition}
For any knot $K$, $(\kappa +\tau)[K] \leq 2\pi sp[K]$.
\end{proposition}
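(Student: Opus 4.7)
The plan is to invoke the Honma--Saeki characterization of Theorem \ref{characterizekappatau} directly on a minimal spiral projection. Recall that the characterization says
\[\frac{1}{\pi}(\kappa+\tau)[K] = \min_{K \in [K]} \min_{v \in S^2} \min_{w \in S^1}(\hat{\mu}(w,P_{v}(K)) + \nu(P_{v}(K))),\]
so it suffices to exhibit a single conformation $K$, projection direction $v$, and in-plane direction $w$ for which $\hat{\mu}(w,P_{v}(K)) + \nu(P_{v}(K)) \leq 2\, sp[K]$.

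First, I would take a 3-space conformation of $K$ whose projection $P_v(K)$ realizes the spiral index, i.e.\ is in spiral form with exactly $sp[K]$ maxima. By definition of spiral form, $P_v(K)$ contains no $s$-inflection points; a priori, however, it could contain $u$-inflection points. Second, as observed at the start of the proof of Theorem \ref{psbltecurvtors}, any $u$-inflection point $x$ can be removed by a planar isotopy that pushes the curvature at $x$ to match the sign of the curvature on a deleted neighborhood. Since $u$-inflection points do not change the sign of curvature, this isotopy keeps the projection in spiral form and does not alter the number of maxima in any direction. Realizing the planar isotopy as a 3-space isotopy (leaving the heights of crossings essentially unchanged) produces a new conformation whose projection still has $sp[K]$ maxima and now satisfies $\nu(P_v(K)) = 0$.

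Third, I would observe that because the resulting projection is in spiral form, the equivalent characterization cited after Definition~1 gives $\mu(w,P_v(K)) = sp[K]$ for every $w \in S^1$, so $\hat{\mu}(w,P_v(K)) = 2\, sp[K]$ identically in $w$. Plugging this pair $(v,w)$ into the Honma--Saeki formula yields
\[\frac{1}{\pi}(\kappa+\tau)[K] \leq \hat{\mu}(w,P_v(K)) + \nu(P_v(K)) = 2\, sp[K] + 0,\]
which rearranges to the desired inequality $(\kappa+\tau)[K] \leq 2\pi\, sp[K]$.

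The only nontrivial point is the smoothing step: one must be sure that removing $u$-inflection points can be accomplished by a genuine 3-space isotopy (so that the result is still a legitimate conformation of $K$, not just an abstract planar diagram), and that the smoothing does not inadvertently create $s$-inflection points elsewhere. This should follow from a local argument, since a $u$-inflection point is removed by an arbitrarily small perturbation of the tangent direction in a neighborhood of the point, which can be lifted to an ambient isotopy supported near the preimage and does not affect curvature signs outside that neighborhood. Once this is accepted, the remainder of the argument is a one-line application of the Honma--Saeki formula.
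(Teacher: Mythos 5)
Your proposal is correct and follows essentially the same route as the paper: take a conformation whose projection realizes $sp[K]$, eliminate the $u$-inflection points as in the proof of Theorem \ref{psbltecurvtors}, and then feed $\hat{\mu}(w,P_v(K)) + \nu(P_v(K)) = 2\,sp[K]$ into the Honma--Saeki characterization. Your added care about realizing the smoothing as a genuine $3$-space isotopy is a reasonable refinement but does not change the argument.
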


\begin{proof}
Consider a spiral projection $P(K)$ of a conformation of $K$ which realizes $sp[K]$. Then $P(K)$ has no $s$-inflection points and, as in the above proof, we may elimate each $u$-inflection points while remaining in spiral form. Thus, $P(K)$ has 2$sp[K]$ extrema, which implies that 
$$\hat{\mu}(v,P(K)) + \nu(P(K)) =\hat{\mu}(v,P(K))=2sp[K]$$
Minimizing the left side of this equation over all projections and conformations provides the desired equality.
\end{proof}

We now have the tools necessary to characterize all knots $K$ for which \\
$(\kappa + \tau)[K]= 6\pi$, a question posed in \cite{Honma}.

\begin{proposition} 
For a knot $K$, $(\kappa + \tau)[K]= 6\pi$ if and only if $psb[K]=3$. Further, these knots are precisely the 2-bridge knots that are not 2-braid and the 3-spiral knots.
\end{proposition}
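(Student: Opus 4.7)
The plan is to prove the biconditional using the two sandwich bounds $2\pi\, psb[K]\le(\kappa+\tau)[K]\le 2\pi\, sp[K]$ coming from Theorem \ref{psbltecurvtors} and the immediately preceding proposition, together with the Honma--Saeki inequality $(\kappa+\tau)[K]\le 4\pi b[K]-2\pi$ used in the bound on $psb[K]$, and the earlier observation that no $1$- or $2$-curly knots exist. The explicit identification of the knots in this class will fall out of the same case analysis.

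For the forward direction, suppose $(\kappa+\tau)[K]=6\pi$. Theorem \ref{psbltecurvtors} gives $psb[K]\le 3$, so it suffices to rule out $psb[K]\le 2$. If $psb[K]=1$, then $b[K]=1$, $K$ is the unknot, and $(\kappa+\tau)[K]=2\pi$, a contradiction. If $psb[K]=2$, then $b[K]=psb[K]=2$, so Proposition \ref{prop:b=psbimpliessp=b} gives $sp[K]=2$; since no $2$-curly knot exists, $\beta[K]=2$, so $K$ is $2$-braid with $sp[K]=2$, whence $(\kappa+\tau)[K]\le 2\pi\, sp[K]=4\pi$, again a contradiction. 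For the backward direction, assume $psb[K]=3$. The lower bound $(\kappa+\tau)[K]\ge 6\pi$ is immediate; for the matching upper bound, note $b[K]\le psb[K]=3$. If $b[K]=2$, the Honma--Saeki inequality gives $(\kappa+\tau)[K]\le 4\pi\cdot 2-2\pi=6\pi$; if $b[K]=3$, then $b[K]=psb[K]$ forces $sp[K]=3$ by Proposition \ref{prop:b=psbimpliessp=b}, and $(\kappa+\tau)[K]\le 2\pi\cdot 3=6\pi$ follows.

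The same case split yields the explicit description. When $psb[K]=3$ and $b[K]=2$, $K$ is $2$-bridge and cannot be $2$-braid (else $sp[K]=2$ and hence $psb[K]\le 2$); when $b[K]=3$, $K$ is $3$-spiral by Proposition \ref{prop:b=psbimpliessp=b}. Conversely, a $2$-bridge knot that is not $2$-braid has $2\le psb[K]\le 2b[K]-1=3$ and cannot have $psb[K]=2$, for that would force $K$ to be $2$-braid by the argument above; a $3$-spiral knot satisfies $psb[K]\le sp[K]=3$, with $psb[K]=3$ enforced either by $b[K]=3$ directly or, if $b[K]=2<3=sp[K]$, by the contrapositive of Proposition \ref{prop:b=psbimpliessp=b}. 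The main obstacle is organizational --- keeping $b[K]$, $psb[K]$, and $sp[K]$ properly aligned across each sub-case --- since every ingredient inequality has already been established earlier in the paper.
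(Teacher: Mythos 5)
Your proof is correct and follows the same overall decomposition as the paper: the forward direction rules out $psb[K]\le 2$ via the chain $psb=2\Rightarrow b=2\Rightarrow sp=2\Rightarrow\beta=2$, and the backward direction splits on $b[K]\in\{2,3\}$, using the sandwich $2\pi\,psb[K]\le(\kappa+\tau)[K]\le 2\pi\,sp[K]$ together with Proposition~\ref{prop:b=psbimpliessp=b}. The only real divergence is in two leaf-level justifications, and both substitutions are valid and arguably more self-contained. Where the paper cites Milnor's computation that $2$-braid knots have curvature-torsion exactly $4\pi$, you instead derive $(\kappa+\tau)[K]\le 2\pi\,sp[K]=4\pi$ from the paper's own proposition relating curvature-torsion to spiral index. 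And where the paper invokes Honma and Saeki's result that $2$-bridge knots with $\beta[K]>2$ have curvature-torsion exactly $6\pi$, you instead get the upper bound $(\kappa+\tau)[K]\le 4\pi b[K]-2\pi=6\pi$ from the Honma--Saeki inequality already used in the proof that $psb[K]\le 2b[K]-1$, and then pin down equality with the lower bound $2\pi\,psb[K]$. Your version leans only on inequalities already established or quoted earlier in the paper rather than on two additional external facts, at no cost in length; you also explicitly dispose of the $psb[K]=1$ and $b[K]=1$ cases, which the paper leaves implicit.
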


\begin{proof}
If $(\kappa + \tau)[K]= 6\pi$ then $psb[K]\le 3$. However, if $psb[K]=2$ then $sp[K]=\beta[K] =2$ and $K$ is a 2-braid knot. However, Milnor has proved that 2-braid knots have curvature-torsion equal to $4\pi$. Therefore, $(\kappa + \tau)[K]= 6\pi$ implies that $psb[K]=3$.

Conversely, assume that $psb[K] = 3$. Then $b[K]$ is either $2$ or $3$. If $b[K]=2$ then Honma and Saeki have proved that $(\kappa + \tau)[K]= 6\pi$ whenever $\beta[K] > 2$. If $b[K]=3$, then $b[K]=psb[K]=sp[K]=3$ and $(\kappa + \tau)[K]\le 6\pi$ since curvature-torsion is no greater than spiral index times $2\pi$. Since $(\kappa+\tau)[K]$ is greater than or equal to $2\pi psb[K]$, $(\kappa + \tau)[K]= 6\pi$.

By the argument above, $psb[K] = 3$ implies that either $b[k]=2$ with $\beta[K]>2$ or $sp[K]=3$ as desired. Further, if $b[K]=2$ then $psb[K]\le 2b[K]-1 = 3$. If $\beta>2$ then $psb[K]\neq 2$ implying that $psb[K]=3$. If $sp[K]=3$ then $psb[K]=3$, since $psb[K]=2$ would imply that $K$ is a 2-braid.
\end{proof}

We have also determined that for a knot $K$ with fewer than $9$ crossings, $2\pi psb[K]=(\kappa + \tau)[K]$. It remains open whether there exists a knot $K$ where $2\pi psb[K]<(\kappa + \tau)[K]$.

\bibliographystyle{plain}
\bibliography{spiralrefs}

\end{document}